\documentclass[11pt, letterpaper]{amsart}
\usepackage[left=1in,right=1in,bottom=0.5in,top=0.8in]{geometry}
\usepackage{amsfonts}
\usepackage{amsmath, amssymb}
\usepackage{graphicx}
\usepackage[font=small,labelfont=bf]{caption}
\usepackage{epstopdf}
\usepackage[pdfpagelabels,hyperindex]{hyperref}
\usepackage{xcolor}
\usepackage{amsthm}
\usepackage{float}
\usepackage{pgfplots}
\usepackage{listings}
\usepackage{longtable}
\usepackage{mathrsfs}
\usepackage{tikz}                   
\usepackage{tikz-cd}
\usepackage{cleveref}
\usepackage{comment}
\usepackage{setspace}

\newtheorem{thm}{Theorem}[section]

\newtheorem{prop}[thm]{Proposition} 
\newtheorem{lem}[thm]{Lemma}

\theoremstyle{definition}
\newtheorem{defn}[thm]{Definition}

\newtheorem{rem}[thm]{Remark}

\theoremstyle{remark}

\newcommand{\V}{\mathfrak{V}} 
\newcommand{\Z}{\mathbb{Z}} 
\newcommand{\R}{\mathbf{R}} 
\renewcommand{\k}{\mathbf{k}}
\newcommand{\Pf}{\mathbf{P}} 
\newcommand{\W}{\mathcal{W}}
\newcommand{\F}{\mathbf{F}} 


\renewcommand{\P}{\mathbb{P}} 
\newcommand{\LP}{\mathbb{LP}} 
\renewcommand{\a}{\mathfrak{a}} 
\renewcommand{\t}{\textbf{t}} 

\newcommand{\ess}{\mathrm{Ess}} 
\newcommand{\e}{\mathfrak{e}}

\newcommand{\cd}{\mathrm{cd}}

\DeclareMathOperator{\im}{\text{Im}}
\definecolor{energy}{RGB}{114,0,172}
\definecolor{freq}{RGB}{45,177,93}
\definecolor{spin}{RGB}{251,0,29}
\definecolor{signal}{RGB}{203,23,206}
\definecolor{circle}{RGB}{217,86,16}
\definecolor{average}{RGB}{203,23,206}
\colorlet{shadecolor}{gray!20}
\pgfplotsset{compat=1.9}

\usepgflibrary{fpu}

\makeatletter
\let\c@equation\c@thm
\raggedbottom
\makeatother
\numberwithin{equation}{section}

\title{Polymatroidal Tilings and the Chow Class of Linked Projective Spaces}

\begin{document}


\author[]{EDUARDO ESTEVES}
\address{Instituto de Matemática Pura e Aplicada (IMPA), Rio de Janeiro, Brazil}
\email{esteves@impa.br}

\author[]{FELIPE DE LEON}
\address{Instituto de Matemática Pura e Aplicada (IMPA), Rio de Janeiro, Brazil}
\email{felipe.leon@impa.br}

\keywords{}
\urladdr{}
\title{Polymatroidal Tilings and the Chow Class of Linked Projective Spaces}
\thanks{The authors thank Omid Amini for helpful discussions. Felipe de Leon was supported by CAPES.}

\begin{abstract}

Linked projective spaces are quiver Grassmanians of constant dimension one of certain quiver representations, called linked nets, over special class of quivers, called $\Z^n$-quivers. They were recently introduced as a tool for describing schematic limits of families of divisors. They are subschemes of products of projective spaces of the same dimension. It is an open question whether they are degenerations of the (small) diagonal. 
We show that they have the Chow class of the diagonal. \end{abstract} 
\maketitle

A family of linear series on a family of smooth varieties degenerating to a singular variety $X$ degenerates to at least one linear series, if the family of varieties is semistable, but often infinitely many linear series, if $X$ is reducible. How to understand the collection of all limits of linear series? 


If $X$ is a curve of compact type, there is the theory of limit linear series by Eisenbud and Harris \cite{Eisenbud1986}. More generally, if $X$ is a nodal curve, there is the theory by Osserman \cite{Osserman2019}, who organizes the limits of linear series as a quiver representation in the category of linear series on $X$. This approach was generalized and carried out in detail by Esteves, Santos and Vital \cite{esteves2024quiverrepresentationsarisingdegenerations,Esteves22072025} for the general case of a semistable family of varieties (and even more generally; see the end of the introduction). In particular, they showed that the limits of divisors along the family are parametrized by the image of a map $\LP(\V)\rightarrow\mathrm{Hilb}_X$, where $\V$ is the representation in the category of vector spaces underlying the limit representation in the linear series, and $\LP(\V)$ is its quiver Grassmannian of subrepresentations of constant dimension $1$.


Furthermore, they abstracted the properties of $\V$ arising from the degeneration to deal more generally with certain representations, called linked nets, of certain quivers, called $\Z^n$-quivers.

A quiver $Q$ is a $\Z^n$-quiver if it is endowed with a nontrivial partition $T=\lbrace A_0,\dots,A_n\rbrace$ of its arrow set in $n+1$ parts, called \textit{arrow types}, such that for each vertex there is a unique arrow of each type leaving it;  and each vertex is connected to each other by paths that do not contain arrows of all types, called \textit{admissible paths}, each such path having the same number of arrows of each type. (See  \Cref{section:2} for precise definitions and further details.) 
 
A representation $\V$ of $Q$ in the category of finite-dimensional vector spaces over a field $\k$ is said to be a \textit{linked net} if: \begin{enumerate}
    \item The composition $\varphi^\V_\gamma$ of the maps associated to each admissible path $\gamma$ depends only on the vertices the path connects, up to homothety;
    \item The composition $\varphi^\V_\gamma$ is zero if $\gamma$ is a non-admissible path;
    \item For any two admissible paths $\gamma_1$ and $\gamma_2$ leaving the same vertex and with no common arrow type, $\ker(\varphi^\V_{\gamma_1})\cap\ker(\varphi^\V_{\gamma_2})=0$
\end{enumerate} The representation $\V$ called \textit{pure} if the associated vector spaces $V_v$ have the same dimension. It is called \textit{exact} if the image of $\varphi_\gamma^\V$ is equal to the kernel of $\varphi^\V_\mu$ for any admissible paths $\gamma$ and $\mu$ whose concatenation $\mu\gamma$ is a circuit containing a single arrow of each type for all types.

$\Z^n$-quivers are infinite. We focus on the class of linked nets $\V$ that are \textit{finitely generated}, those for which there exists a finite subset of vertices $H$ of $Q$ such that for each vertex $v$, there is a vertex $u\in H$ such that $\varphi_\gamma$ is surjective for each admissible path connecting $u$ to $v$. In this case, $H$ can be chosen convex, that is, satisfying $P(H)=H$, where $P(H)$ is the \textit{hull} of $H$, i.e., the set of vertices that can be reached from $H$ by an admissible path avoiding each given arrow type.

 The \textit{linked projective space} $\LP(\V)$ is the quiver Grassmannian of subrepresentations of pure dimension $1$ of $\V$.  If $\V$ is pure of dimension $r+1$ and generated by a finite convex set $H$, then $\LP(\V)$ naturally embeds into a product of projective spaces $\prod_{v\in H} \P(V_v)$. 

The linked nets $\V$ arising from limit of linear series are exact and finitely generated. Furthermore, they are \textit{smoothable}. More precisely, $\V$ is the residue of a representation of $Q$ in the category of modules over a discrete valuation ring whose associated maps generically isomorphisms.


As mentioned in \cite{esteves2024quiverrepresentationsarisingdegenerations}, it is an
open question whether an exact finitely generated linked net $\V$ is smoothable. When $\V$ is smoothable, $\LP(\V)$ is the special fiber of a Mustafin variety and, therefore, is a flat degeneration of the (small) diagonal in $\prod_{v\in H}\P(V_v)$. In this paper, we establish a related but weaker property: that linked projective spaces have the Chow class of the diagonal. More precisely,
\vspace{0.1cm}

{\noindent\textbf{\Cref{thm:4.4}}\textit{ Let $\V$ be a nontrivial exact finitely generated linked net of vector spaces of dimension $r+1$ over a $\Z^n$-quiver $Q$, and let $H$ be its minimum set of generators. Then $H$ is convex and $$[\LP(\V)]=\sum_{q\in \Omega_{r}(\Z)} \prod_{v\in H} h_v^{r-q(v)}\in A^*\left(\prod_{v\in H}\P(V_v)\right),$$  where $\Omega_{r}(\Z)$ stands for the integer points in $$\Omega_{r}:=\left\lbrace q\in \R^H \,\big\vert \,q(v)\geq 0\,\, \forall v\in H \text{ and } \sum_{v\in H}q(v)=r\right\rbrace.$$}
    }
\vspace{0.1cm}

This is not sufficient to conclude that $\LP(\V)$ is a deformation of the small diagonal. Even having the same Hilbert polynomial as the diagonal does not suffice; see \cite{10.1093/imrn/rnp201}. 

We now proceed to outline the content in this paper. 

In \Cref{section:1}, we collect facts about polytopes and polyhedral tilings of $\Omega_{r+1}$.  The core of this section is \Cref{prop:1.3} which gives conditions under which certain families of polytopes form polyhedral tilings. The results presented are adapted versions of results due to E. Esteves and O.~ Amini in a forthcoming paper. For the sake of completeness, sketches of their proofs are included. 

In \Cref{section:2}, we introduce the notion of extreme vertices. They play an important role in showing that the minimum set of generators $H$ of $\V$ is convex. The hull condition, $P(H)=H$, has several incarnations in the literature. For example, for certain models of $Q$, it coincides with tropical convexity; see \cite[Prop. 5.8]{Esteves22072025}. More generally, it defines an abstract convexity on the set of vertices of $Q$; see \cite{poncet:hal-00922374}. In this sense, what we call extreme points coincide with the usual notion of extreme points coming from abstract convexity for the opposite quiver $Q^*$, instead of $Q$ itself. Our notion of extreme points is also related to the classical notion of $v$-reduced divisors on complete linear series on finite graphs; see \cite{BAKER2007766}, as well as to Osserman's notion of concentrated multidegrees; see \cite{Osserman+2019+57+88}, and both are equivalent to extremeness in our definition; see \Cref{rem:2.3}.

In \Cref{section:3}, we associate to $\V$ a family of polytopes, one polytope $\Pf_v$ for each $v\in H$, where $H$ is the minimum set of generators of $\V$. We show that, for each $v\in H$, the faces of $\Pf_v$ that intersect the relative interior of $\Omega_{r+1}$, are in bijection with the set of polygons in $H$ that contain $v$. 

\Cref{section:4} is devoted to showing \Cref{thm:4.2} and \Cref{thm:4.4}.  Roughly speaking, \Cref{thm:4.2} states that the set of polytopes $\Pf_v$ for $v\in H$ forms a polyhedral tiling on $\Omega_{r+1}$. Then, \Cref{thm:4.4} is deduced from \Cref{thm:4.2} and a result by Li \cite{10.1093/imrn/rnp201}, who computes the Chow classes of the irreducible components of $\LP(\V)$.

We believe that the techniques developed in this paper may also be useful for studying related spaces. For instance, inspired by work by Osserman and Esteves in \cite{Esteves2013}, we can define $\P(\V)$ to be the projection of $\LP(\V)$ onto the product of projective spaces associated to the extreme points of the minimum generating set $H$. This scheme could potentially be used to describe schematic limits of divisors in the Chow variety of the limit scheme, as an alternative to the Hilbert scheme. We plan to explore these connections in a future work.


As mentioned in the beginning, the results in this paper are motivated by \cite{Esteves22072025} and \cite{esteves2024quiverrepresentationsarisingdegenerations}. We briefly discuss how. 

Let $X$ be a connected reduced projective scheme over a field $\k$. A \textit{regular smoothing} of $X$ is the data of a flat projective map $\mathcal{X}\rightarrow B$, where $\mathcal{X}$ is regular and $B$ is the spectrum of a discrete valuation ring $R$ with residue field $\k$, and an isomorphism of the special fiber of the map with $X$. 

Let $(L_\eta,V_\eta)$ be a linear series on the general fiber of a regular smoothing $\mathcal{X}\rightarrow B$. Since $\mathcal{X}$ is regular, there is a line bundle extension  $\mathcal{L}$ of $L_\eta$ to $\mathcal{X}$. Let $X_0,\dots,X_n$ be the irreducible components of $X$; they are Cartier divisors of $\mathcal{X}$. Every other line bundle extension of $L_\eta$ is of the form $\mathcal{L}_u:=\mathcal{L}(\sum \ell_i X_i)$ for a unique $(n+1)$-tuple $u=(\ell_0,\dots,\ell_n)\in \Z^{n+1}_{\geq 0}$ with $\min\lbrace \ell_i\rbrace=0$. The set of those $(n+1)$-tuples is the vertex set of a quiver $Q$. As for the arrows, there is an arrow, and only one, connecting $u$ to $v$ if and only if $\mathcal{L}_u(X_i)\simeq \mathcal{L}_v$ for a certain $i$, which defines the type of the arrow. There is a representation $\mathfrak{L}$ associating to a vertex $u$ of $Q$ the line bundle $L_u:=\mathcal{L}_u\vert_X$, and to the arrows the restriction to $X$ of the maps $\mathcal{L}_u\rightarrow \mathcal{L}_u(X_i)$. Finally, there is a subrepresentation $\V$ of the representation obtained by taking global sections in $\mathfrak{L}$ which associates to a vertex $u$ of  $Q$ the image in $H^0(X,L_u)$ of $V_\eta\cap H^0(\mathcal{X},\mathcal{L}_u)$.

As mentioned before, $Q$ is a $\Z^n$-quiver,  and $\V$ is an exact finitely generated linked net. The triple $\mathfrak{g}=(Q,\mathfrak{L},\mathfrak{V})$ is called a \textit{ linked net of linear series}. In this case, $\LP(\V)$ is a flat degeneration of the small diagonal in $\prod_{v\in H}\P(V_v)$, where $H$ is the minimum set of generators of $\V$. 

The assignment taking $\mathfrak{M}\in \LP(\V)$ to $Z(\mathfrak{M})$, the intersection of the zero loci of the sections in $\mathfrak{M}$, is the underlying function of a scheme morphism $\LP(\V)\rightarrow \mathrm{Hilb}_X$. In addition, the image of $\LP(\V)$ in $\mathrm{Hilb}_X$ is the associated reduced subscheme of the limit of $\P(V_\eta)$ viewed naturally as a subscheme of the generic fiber of $\mathrm{Hilb}_{\mathcal{X}/B}$.

\section{Tilings induced by families of vector spaces.}\label{section:1}

In this section, we collect some facts about polyhedral tilings. The material in this section is based on \cite{amini2024tropicalizationlinearseriestilings}.

\subsection{Polytopes.} \label{subsection:1.1}

We review the theory of modular pairs and their base polytopes. 

Let $H$ be a finite nonempty set, and $\R^H$ the vector space whose elements are maps $q:H\rightarrow \R$. For each integer $d$, we denote by $\R^H_d$ the affine subspace of $\R^H$ consisting of all $q$ with $q(H):=\sum_{v\in H} q(v)=d$. For each $I\subseteq H$, set $q(I):=\sum_{v\in I} q(v)$. 

We denote by $2^H$ the family of subsets of $H$. For $I\subseteq H$, denote by $I^c$ the complement of $I$ in $H$, that is, $I^c=H-I$.  

We say a function $\mu:2^H\rightarrow \R$ is \textit{supermodular} if $\mu(\emptyset)=0$ and we have inequalities $$\mu(I_1)+\mu(I_2)\leq \mu(I_1\cup I_2)+\mu(I_1\cap I_2) \text{ for each } I_1,I_2\subseteq H.$$
 Similarly, we say that a function $\mu:2^H\rightarrow \R$ is \textit{submodular} if the above inequalities are all reversed. In any case, the quantity $\mu(H)$ is called the \textit{range} of $\mu$. Functions that are both supermodular and submodular are called \textit{modular}.

For each $\mu:2^H\rightarrow \R$, let $\mu^*:2^H\rightarrow \R$ be the \textit{function adjoint to} $\mu$, defined by $$\mu^*(I):=\mu(H)-\mu(H-I) \text{ for each } I\subseteq H.$$
Observe that $\mu$ is supermodular (resp. submodular) if and only if $\mu^*$ is submodular (resp. supermodular). When $\mu$ is supermodular, we refer to $\mu^*$ as the \textit{submodular function adjoint to }$\mu$ and call the ordered pair $(\mu,\mu^*)$ an \textit{adjoint modular pair}, or simply a \textit{modular pair}. 

For each modular pair $(\mu,\mu^*)$, we define the subset in $\R^H$:
$$\Pf_\mu=\Pf_{(\mu,\mu^*)}:=\left\lbrace q\in \R^H \,  \vert \, \mu(I)\leq q(I)\leq \mu^*(I) \text{ for each } I\subseteq H\right\rbrace.$$
We say that $\Pf_\mu$ is the \textit{base polytope associated to} $\mu$, or simply the \textit{polytope of} $\mu$.

An \textit{ordered partition} of $H$ is an ordered sequence $\pi=(\pi_1,...,\pi_s)$ of pairwise disjoint subsets of $H$ whose union is equal to $H$. It is called \textit{nontrivial} if $\pi_i\neq\emptyset$ for every $i$. If $\pi$ is nontrivial, we call it a \textit{bipartition} if $s=2$, and a $s$-\textit{partition} in general. If $\pi$ is a bipartition, we put $\pi^c=(\pi_2,\pi_1)$.

The data of an ordered partition $\pi$ as above is equivalent to the data of a filtration $$F_\bullet :=(F_0,\dots,F_s) \text{ where } \emptyset=F_0\subseteq F_1\subseteq F_2\subseteq \cdots \subseteq F_s=H$$ 
via the correspondence $F_j:=\pi_1\cup\cdots \cup \pi_j$ for $j=1,...,s$ in one direction, and $\pi_j:=F_j-F_{j-1}$ for $j=1,...,s$ in the other direction. 

For each supermodular function $\mu$ and each two subsets $J_1,J_2\subseteq H$ with $J_1\subseteq J_2$, we define
$$\mu_{J_2/J_1}(I):=\mu((I\cap J_2)\cup J_1)-\mu(J_1) \text{ for each } I\subseteq H$$
Then, $\mu_{J_2/J_1}$ is a supermodular function. We say that $\mu_{J_2/J_1}$ is \textit{obtained by restricting to $J_2$ and contracting $J_1$}. 

For each ordered partition $\pi=(\pi_1,\dots,\pi_s)$ of $H$ with $F_\bullet$ the corresponding filtration, we define the function $\mu_\pi$ on $2^H$ as the sum $$\mu_\pi=\sum_{i=1}^s\mu_{F_i/F_{i-1}}.$$
Note that $\mu_\pi$ is supermodular. We say that $\mu_\pi$ is the \textit{splitting of $\mu$ with respect to the ordered partition $\pi$}. We say that a supermodular function $\nu$ is a \textit{splitting of $\mu$} if it coincides with $\mu_\pi$ for some ordered partition $\pi$ of $H$ and call the splitting \textit{nontrivial} if $\nu\neq \mu$. 


Let $\mu$ be a supermodular function. Denote by $\cd_\mu$ the largest integer $s$ for which there is a $s$-partition $\pi$ of $H$ such that $\mu=\mu_\pi$. We call $\mathrm{cd}_\mu$ the \textit{codimension} of $\mu$. We do so because $\dim \Pf_\mu+\mathrm{cd}_\mu=\vert H\vert $; see \cite[Prop.\,2.7]{amini2024tropicalizationlinearseriestilings}.

A supermodular function $\mu$ is called \textit{simple} if there is no nontrivial ordered partition $\pi$ of $H$ such that $\mu=\mu_\pi$, or equivalently, if $\Pf_\mu$ has dimension $\vert H\vert-1$, or equivalently, $\cd_\mu=1$. 

Let $\mu,\nu$ be two supermodular functions of range $n$, and let $\pi=(I,J)$ be a bipartition of $H$. We say that $\pi$ is a \textit{separation} for the ordered pair $(\mu,\nu)$ if $\mu(I)+\nu(J)\geq n$. We say that the separation is strict if the inequality is strict. We say that the separation is 
\textit{nontrivial} provided that it is either strict, or we have $\mu_\pi\neq \mu$ or $\nu_\pi\neq \nu$. Alternatively, $\pi$ is a separation for $(\mu,\nu)$ if and only if $\mu(I)\geq \nu^*(I)$, if and only if $n\geq \nu^*(I)+\mu^*(J)$. 

The following lemma will be used in the subsequent sections. 
\begin{lem}\label{lem:1.1}
Let $\mu:2^H\rightarrow\R$ be a nonnegative supermodular function.  Then, both $\mu$ and $\mu^*$ are nondecreasing. In addition, if \begin{enumerate}
    \item \label{item:uno} $\mu^*(v)\neq 0$ for all $v\in H$, and
    \item \label{item:dos} there is $v_0\in H$ such that $\mu^*(v_0)=\mu^*(H)$,
\end{enumerate}
then $\mu$ is simple. 
\end{lem}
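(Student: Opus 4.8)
The plan is to prove the two assertions separately: first that any nonnegative supermodular $\mu$ has both $\mu$ and $\mu^*$ nondecreasing, and then that the two extra hypotheses force simplicity.

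For the monotonicity, let me recall what nondecreasing should mean here: if $I_1\subseteq I_2$ then $\mu(I_1)\le\mu(I_2)$. First I would establish this for $\mu$. Apply supermodularity to the pair $I_1$ and $J$ where $J:=I_2\setminus I_1$ (so $I_1\cap J=\emptyset$ and $I_1\cup J=I_2$): this gives $\mu(I_1)+\mu(J)\le\mu(I_2)+\mu(\emptyset)=\mu(I_2)$. Since $\mu(J)\ge 0$ by nonnegativity, we conclude $\mu(I_1)\le\mu(I_2)$. For $\mu^*$, recall $\mu^*$ is submodular (the adjoint of a supermodular function) and that $\mu^*(I)=\mu(H)-\mu(H\setminus I)$. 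If $I_1\subseteq I_2$ then $H\setminus I_2\subseteq H\setminus I_1$, so by the monotonicity of $\mu$ just proved, $\mu(H\setminus I_2)\le\mu(H\setminus I_1)$, and hence $\mu^*(I_1)=\mu(H)-\mu(H\setminus I_1)\le\mu(H)-\mu(H\setminus I_2)=\mu^*(I_2)$. This handles both monotonicity claims cleanly.

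For simplicity, the cleanest route uses the characterization already recorded in the excerpt: $\mu$ is simple if and only if $\cd_\mu=1$, i.e.\ $\mu$ admits no nontrivial splitting. So I would argue by contradiction: suppose $\mu=\mu_\pi$ for some nontrivial ordered partition $\pi=(\pi_1,\dots,\pi_s)$ with $s\ge 2$, and derive a contradiction with hypotheses \eqref{item:uno} and \eqref{item:dos}. The strategy is to track the element $v_0$ from \eqref{item:dos}. Let $\pi_k$ be the part containing $v_0$, with corresponding filtration piece $F_{k-1}\subsetneq F_k$. Writing out $\mu^*(H)=\mu(H)$ and $\mu^*(v_0)=\mu(H)-\mu(H\setminus\{v_0\})$, the equality $\mu^*(v_0)=\mu^*(H)$ is equivalent to $\mu(H\setminus\{v_0\})=0$. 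Since $\mu$ is nonnegative and nondecreasing and $\mu(H\setminus\{v_0\})=0$, every subset $I\subseteq H\setminus\{v_0\}$ satisfies $\mu(I)=0$; in particular all the mass of $\mu$ is concentrated on subsets containing $v_0$. I expect this ``concentration'' to be incompatible with a genuine multi-block splitting once one examines the additive decomposition $\mu=\sum_i\mu_{F_i/F_{i-1}}$.

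The main obstacle, and the part requiring the most care, will be extracting the contradiction from the splitting decomposition: I need to show that $\mu(H\setminus\{v_0\})=0$ together with $\mu^*(v)\neq 0$ for all $v$ forces $s=1$. Concretely, for $v\neq v_0$ one has $\mu^*(v)=\mu(H)-\mu(H\setminus\{v\})$; since $H\setminus\{v\}\supseteq\{v_0\}$ carries all the mass while $H\setminus\{v_0\}$ carries none, I would compute $\mu(H\setminus\{v\})$ and $\mu(H)$ in terms of the single-block contribution $\mu_{F_k/F_{k-1}}$ and show that any proper splitting would either make some $\mu^*(v)=0$ (violating \eqref{item:uno}) or collapse to the trivial partition. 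The delicate bookkeeping is in handling the restriction–contraction operators $\mu_{F_i/F_{i-1}}$ on subsets straddling the $v_0$-block, and in ruling out each possible position of $v_0$ within the filtration; I would organize this by splitting an arbitrary $I$ as $I=(I\cap F_{k-1})\cup(I\cap\pi_k)\cup(I\setminus F_k)$ and using that the blocks disjoint from $v_0$ contribute nothing. If this direct analysis proves awkward, the fallback is to invoke the dimension identity $\dim\Pf_\mu=|H|-\cd_\mu$ and instead show directly that $\Pf_\mu$ has dimension $|H|-1$, by exhibiting $|H|-1$ affinely independent tight constraints or, dually, producing an interior point relative to $\R^H_{\mu(H)}$.
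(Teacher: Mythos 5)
Your monotonicity argument is correct and is actually more self-contained than the paper's: you get $\mu(I_1)\le\mu(I_2)$ directly from supermodularity applied to the disjoint pair $I_1$, $I_2\setminus I_1$ plus nonnegativity, whereas the paper routes through the existence of a point $q\in\Pf_\mu$ with $q(I)=\mu(I)$ (a cited fact about base polytopes). The deduction for $\mu^*$ by complementation is the same in both.

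The second half, however, is left as a plan rather than a proof, and the step you flag as "the main obstacle" is exactly where it stops. Your reductions are sound: $\mu^*(v_0)=\mu^*(H)$ is indeed equivalent to $\mu(H\setminus\{v_0\})=0$, and monotonicity plus nonnegativity then give $\mu(I)=0$ for every $I\subseteq H\setminus\{v_0\}$. But the contradiction does not require any bookkeeping with subsets straddling the $v_0$-block: if $\mu=\mu_\pi$ for a nontrivial $\pi=(\pi_1,\dots,\pi_s)$, evaluating $\mu_{F_i/F_{i-1}}$ at each part $\pi_j$ shows $\mu(\pi_j)=\mu(F_j)-\mu(F_{j-1})$, hence $\mu(H)=\sum_j\mu(\pi_j)$. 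With $v_0\in\pi_k$, every other part lies in $H\setminus\{v_0\}$ and so contributes $0$, giving $\mu(H)=\mu(\pi_k)$; then for $j\neq k$ and $u\in\pi_j$ you get $\mu^*(u)\le\mu^*(\pi_j)=\mu(H)-\mu(H\setminus\pi_j)\le\mu(H)-\mu(\pi_k)=0$, contradicting hypothesis (1). You need to actually write this (or the bipartition version, since a nontrivial splitting always coarsens to a nontrivial bipartition splitting). For comparison, the paper avoids mentioning $H\setminus\{v_0\}$ altogether: from $\mu(H)=\mu(\pi_1)+\mu(\pi_2)$ it passes to $\mu^*(H)=\mu^*(\pi_1)+\mu^*(\pi_2)$, uses monotonicity of $\mu^*$ to squeeze $\mu^*(v_0)\le\mu^*(\pi_1)\le\mu^*(H)$ into equalities, concludes $\mu^*(\pi_2)=0$, and then $\mu^*(u)=0$ for $u\in\pi_2$. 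Both routes are two lines once the splitting identity is in hand; your fallback via the dimension formula for $\Pf_\mu$ would be considerably more work and is not needed.
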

\begin{proof}
It follows from \cite[Prop.\,2.7]{amini2024tropicalizationlinearseriestilings} that for each ordered partition $\pi$ of $H$, we have $\Pf_{\mu_\pi}\neq\emptyset$.
Let $J\subseteq I$ be subsets of $H$. Since $\mu$ is nonnegative, for each $q\in \Pf_\mu$,
$$q(I-J)\geq \mu(I-J)\geq 0.$$
By \cite[Prop.\,2.5]{amini2024tropicalizationlinearseriestilings}, there exists $q\in \Pf_\mu$ such that $q(I)=\mu(I)$. Then 
$$\mu(J)\leq q(J)\leq q(J) +q(I-J)=\mu(I).$$
We conclude that $\mu$ is nondecreasing. Applying this to $H-I$ and $H-J$, we obtain
$$\mu^*(J)=\mu(H)-\mu(H-J)\leq \mu(H)-\mu(H-I)=q(I)=\mu^*(I).$$
Hence, also $\mu^*$ is nondecreasing.

Now, assume $(1)$ and $(2)$ hold. Suppose by contradiction that there is a bipartition $\pi=(\pi_1,\pi_2)$ such that $$\mu(H)=\mu(\pi_1)+\mu(\pi_2),$$ or equivalently $$\mu^*(H)=\mu^*(\pi_1)+\mu^*(\pi_2).$$ Since $\pi$ is a bipartition, either $v_0\in \pi_1$ or $v_0\in\pi_2$. Assume $v_0\in \pi_1$. As $\mu^*$ is nondecreasing, we have $$\mu^*(v_0)\leq \mu^*(\pi_1)\leq \mu^*(H).$$ Hence, $\mu^*(\pi_1)=\mu^*(H)$, and thus $\mu^*(\pi_2)=0$. This implies that $\mu^*(u)=0$ for all $u\in \pi_2$, contradicting (1). 
\end{proof}

\subsection{Families}\label{subsection: 1.2}
Let $\mathcal{C}$ be a family of supermodular functions on $H$ of range $d$.  We denote by $\Pf_\mathcal{C}$ the union of the polytopes $\Pf_\mu$ for $\mu\in\mathcal{C}$ and call it the \textit{support of the collection} $\mathcal{C}$. 
 Let $\Omega\subseteq \R^H_d$ be a subset. We say that $\mathcal{C}$ is \textit{complete for} $\Omega$ if for each element $\mu\in\mathcal{C}$ and each bipartition $\pi$ of $H$ such that $\Pf_{\mu_\pi}$ intersects $\Omega$, there is $\mu'\in \mathcal{C}$ distinct from $\mu_\pi$ such that $\mu'_{\pi^c}=\mu_\pi$. We say that $\mathcal{C}$ is separated if each pair $(\mu,\nu)$ of  distinct elements of $\mathcal{C}$ admits a nontrivial separation.

\subsection{Polytopes associated to vector spaces.}\label{subsection: 1.3}

 Let $\k$ be a field. For each $v\in H$, let $U_v$ be a finite-dimensional vector space over $\k$. Define $U:=\bigoplus_{v\in H} U_v$. For each subset $I\subseteq H$, let $U_I:=\bigoplus_{v\in I} U_v$, and denote by $\iota_I\colon U_I\rightarrow U$ and $\theta_I\colon U\rightarrow U_I$ the corresponding insertion and projection maps, respectively. The composition $\theta_I\circ\iota_I$ is the identity. By convention, we set $U_\emptyset:=(0)$. We have a natural exact sequence $$0\rightarrow U_I \xrightarrow{\iota_I} U\xrightarrow{\theta_{I^c}} U_{I^c}\rightarrow
0.$$ We view elements $\varphi\in\k^H$ as endomorphisms of $U$ which, by abuse of notation, we will also denote by $\varphi$. An endomorphism $\varphi\in \k^H$ is an automorphism if and only if $\varphi_v\neq0$ for every $v\in H$.

Let $W\subseteq U$ be a vector subspace. For each $I\subseteq H$, we put $W_I:=\iota_I\circ\theta_I(W)$ and denote by $\mu_W^*(I)$ its dimension, that is, $\mu_W^*(I):=\dim_\k(W_I)$. Similarly, we denote by $W^I$ the intersection of $W$ with $\iota_I(U_I)$, and by $\mu_W(I)$ its dimension. We have a short exact sequence 
$$0\rightarrow W^{I^c}\rightarrow W\rightarrow W_I\rightarrow 0$$
where the second map is the inclusion and the third map is the composition $\iota_I\circ\theta_I$. Furthermore, if $W$ has dimension $r+1$, then $(\mu_W,\mu_W^*)$ is a modular pair of range $r+1$. Put $\Pf_W:=\Pf_{\mu_W}$, the polytope of $\mu_W$. Denote by $\Omega_{r+1}$ the standard simplex in $\R_{r+1}^H$, given by $$\Omega_{r+1}:=\left\lbrace q\in \R_{r+1}^H\, \vert \, q(v)\geq 0 \text{ for all } v\in H \text{ and } q(H)=r+1\right\rbrace.$$
The polytope $\Pf_W$ is a subset of the standard simplex $\Omega_{r+1}$ in $\R_{r+1}^H$. Note as well that $\Pf_{W}=\Pf_{\varphi W}$ for all automorphisms $\varphi\in \k^H$.

\subsection{Splittings.} \label{subsection: 1.4} Let $\pi=(\pi_1,\dots,\pi_s)$ be a nontrivial ordered partition of $H$. Denote by $F_\bullet$ the corresponding filtration of $H$. Let $W\subseteq U$ be a vector subspace. We associate to $\pi$ a subspace $W_\pi$ of $U$ defined as follows: Consider the filtration $$0\subseteq W^{F_{1}}\subseteq W^{F_{2}}\subseteq\cdots\subseteq W^{F_{n-1}}\subseteq W,$$ 
and let $W_\pi$ be its associated graded object. We regard $W_\pi$ as a subspace of $U$ as follows: For each $j\in [s]$,  we have the following exact sequence:
$$0\rightarrow W^{F_{j-1}}\rightarrow W^{F_j}\rightarrow \theta_{\pi_j}(W^{F_j})\rightarrow0$$
Hence, the map $\theta_{\pi_j}\colon U\rightarrow U_{\pi_j}$ induces an isomorphism $W^{F_j}/W^{F_{j-1}}\simeq \theta_{\pi_j}(W^{F_j})$. We identify $W_\pi$ with $\bigoplus_{j=1}^s \theta_{\pi_j}(W^{F_j})$, which is a subspace of $U$ under the natural identification $U=\bigoplus_{j=1}^s U_{\pi_j}$. It follows from \cite[Prop\,5.2]{amini2024residuepolytopes} that $\mu_{W,\pi}=\mu_{W_\pi}$, i.e., the splitting of $\mu_W$ with respect to the ordered partition $\pi$ is equal to $\mu_{W_\pi}$. 


\subsection{Polyhedral tilings.}\label{subsection: 1.5}

For a polytope $\Pf$ in $\R^H_{r+1}$, the \textit{relative interior} of $\Pf$, denoted by $\mathring{\Pf
}$,  is the set of all points $q\in  \Pf$ that do not belong to any proper face of $\Pf$. 

Given a vector space $W\subseteq U$, the associated supermodular function $\mu_W$ is simple if and only if $\mathring{\Pf}_W$ is an open set in $\R^H_{r+1}$. In this case, we say that $W$ is \textit{simple}. 

 Let $W_1$ and $W_2$ be two vector subspaces of $U$ of the same dimension. We say that $W_1$ and $W_2$ are \textit{equivalent} if there exists $\varphi\in (\k^*)^H$ such that $\varphi W_1\subseteq  W_2$. In this case, $\Pf_{W_1}=\Pf_{W_2}$. We also say that $\Pf_{W_1}$ and $\Pf_{W_2}$ \textit{intersect in codimension at most} $1$ if there are faces $\F_1$ and $\F_2$ of $\Pf_{W_1}$ and $\Pf_{W_2}$ respectively of codimension at most $1$ such that $\mathring{\F}_1\cap \mathring{\F}_2\neq\emptyset$.

The following propositions are simplified versions of results by Amini and Esteves in a forthcoming paper. For the sake of completeness, their proofs will be sketched.

\begin{prop}\label{prop:1.2}
Let $W_1$ and $W_2$ be two vector subspaces of $U$ of the same dimension. Let $\varphi\in \k^H$ be a nonzero morphism such that $\varphi W_1\subseteq W_{2}$. Assume that $W_1$ and $W_2$  are simple. Then either $\varphi$ is an isomorphism or the relative interiors $\mathring{\Pf}_{W_1}$ and $\mathring{\Pf}_{W_2}$ are disjoint.    
\end{prop}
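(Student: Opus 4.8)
The plan is to extract from the containment $\varphi W_1\subseteq W_2$ a single inequality between the associated modular functions, and then to show that this inequality is incompatible with the two relative interiors meeting, unless $\varphi$ is invertible.

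First I would set $J:=\{v\in H \,:\, \varphi_v\neq 0\}$, the support of $\varphi$. Since $\varphi\neq 0$ we have $J\neq\emptyset$, and $\varphi$ is an automorphism precisely when $J=H$. So I may assume $J$ is a proper nonempty subset of $H$ and aim to prove that $\mathring{\Pf}_{W_1}$ and $\mathring{\Pf}_{W_2}$ are disjoint. The next step is to analyze the linear map $\varphi|_{W_1}\colon W_1\to W_2$. Because $\varphi$ annihilates every $U_v$ with $v\notin J$ and restricts to an automorphism of $U_J$, it factors as $\iota_J\circ D\circ\theta_J$ for an automorphism $D$ of $U_J$. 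Hence $\varphi(W_1)=\iota_J\,D(\theta_J(W_1))$ has dimension $\dim\theta_J(W_1)=\mu_{W_1}^*(J)$, and it lands inside $\iota_J(U_J)\cap W_2=W_2^J$, of dimension $\mu_{W_2}(J)$. This yields the key inequality $\mu_{W_1}^*(J)\leq\mu_{W_2}(J)$; equivalently, $(J,J^c)$ is a separation for the ordered pair $(\mu_{W_2},\mu_{W_1})$.

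Finally, I would combine this with the defining inequalities of the base polytopes. Every $q\in\Pf_{W_2}$ satisfies the lower bound $q(J)\geq\mu_{W_2}(J)$, while every $q\in\Pf_{W_1}$ satisfies the upper bound $q(J)\leq\mu_{W_1}^*(J)$. Since $W_1$ is simple, $\mathring{\Pf}_{W_1}$ is open in $\R^H_{r+1}$; and since $J$ is a proper nonempty subset, for $v_1\in J$ and $v_2\in J^c$ the direction $e_{v_1}-e_{v_2}$ lies in $\R^H_{r+1}$ and strictly increases $q\mapsto q(J)$. Thus at any $q\in\mathring{\Pf}_{W_1}$ one can move a little in this direction while staying in $\Pf_{W_1}$, which forces the upper bound to be strict, $q(J)<\mu_{W_1}^*(J)$. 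For a hypothetical point $q\in\mathring{\Pf}_{W_1}\cap\Pf_{W_2}$ we would then obtain $q(J)<\mu_{W_1}^*(J)\leq\mu_{W_2}(J)\leq q(J)$, a contradiction. Hence $\mathring{\Pf}_{W_1}\cap\Pf_{W_2}=\emptyset$, which in particular gives $\mathring{\Pf}_{W_1}\cap\mathring{\Pf}_{W_2}=\emptyset$.

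I expect the only delicate point to be the passage from the openness of $\mathring{\Pf}_{W_1}$ to the \emph{strict} inequality $q(J)<\mu_{W_1}^*(J)$: one must check that for a proper nonempty $J$ the functional $q\mapsto q(J)$ genuinely increases along an admissible direction of $\R^H_{r+1}$, so that equality would push $q$ outside $\Pf_{W_1}$ and contradict openness. This is exactly where the hypotheses $\varphi\neq 0$ (giving $J\neq\emptyset$) and $\varphi$ not an isomorphism (giving $J\neq H$), together with the simplicity of $W_1$, are all used. I note that the argument only invokes simplicity of $W_1$; the symmetric hypothesis on $W_2$ is not needed for this one-sided conclusion.
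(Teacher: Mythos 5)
Your proposal is correct, and its key step is the same as the paper's: you let $J$ be the support of $\varphi$, observe that $\varphi|_{W_1}$ factors through $U_J$ and lands in $W_2^J$, and deduce the separation inequality $\mu_{W_1}^*(J)\leq\mu_{W_2}(J)$. Where you diverge is in how you pass from that inequality to disjointness of the relative interiors: the paper notes that $(J,J^c)$ is a nontrivial separation (using simplicity of both $W_1$ and $W_2$) and then invokes an external result (Prop.\,3.4 of the cited Amini--Esteves preprint), whereas you argue directly from the defining inequalities of the base polytopes, using openness of $\mathring{\Pf}_{W_1}$ in $\R^H_{r+1}$ and the admissible direction $e_{v_1}-e_{v_2}$ with $v_1\in J$, $v_2\in J^c$ to force the strict inequality $q(J)<\mu_{W_1}^*(J)$ on the relative interior. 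Your route is self-contained and elementary, correctly identifies where each hypothesis enters, and in fact yields slightly more: it uses only the simplicity of $W_1$ and gives $\mathring{\Pf}_{W_1}\cap\Pf_{W_2}=\emptyset$, not merely disjointness of the two relative interiors. The paper's route is shorter on the page but outsources the geometric content to the separation machinery; yours makes that content explicit. Both are valid.
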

\begin{proof}
Set $I:=\lbrace v\in H\, \vert \, \varphi_v\neq 0\rbrace$. Since $\varphi\neq 0$, we have $I\neq\emptyset$. If $I=H$, then $\varphi$ is an isomorphism. 
Assume $I\neq H$. Then $\pi=(I,I^c)$ is a bipartition. The morphism $\varphi$ induces an injective morphism $c_{\varphi}\colon W_{1,I}\rightarrow W_{2}^I$, whence the inequality $\dim W_{1,I}\leq \dim W_{2}^I$, which implies that $\pi$ is a separation of $\mu_{W_2}$ and $\mu_{W_1}$. Since $W_1$ and $W_2$ are simple, the separation is nontrivial. Thus, by \cite[Prop.\,3.4]{amini2024tropicalizationlinearseriestilings}, the relative interiors $\mathring{\Pf}_{W_1}$ and $\mathring{\Pf}_{\W_2}$ are disjoint.    
\end{proof}
  Let $\mathcal{W}$ be a finite collection of simple vector subspaces of $U$ of dimension $r+1$. We say that $\mathcal{W}$ is \textit{complete} for a subset $\Omega\subseteq\R^H$ if for each $W\in \mathcal{W}$ and each bipartition $\pi$ of $H$ such that $\Pf_{W_\pi}$ intersects $\Omega$, there exists $W'\in\mathcal{W}$ distinct from $W$ such that $\mu_{W'_{\pi^c}}=\mu_{W_\pi}$ (and hence $\mu_{W'}\neq \mu_W$ because $W$ is simple), i.e., if the collection of associated supermodular functions is complete for $\Omega$.

\begin{prop}\label{prop:1.3}
     Let $\mathcal{W}$ be a finite collection of simple nonequivalent vector subspaces of $U$ of dimension $r+1$. Assume: \begin{enumerate}
        \item\label{C1} $\mathcal{W}$ is complete for $\mathring{\Omega}_{r+1}$ and $\Pf_{\mathcal{W}}$ intersects $\mathring{\Omega}_{r+1}$. 
        \item\label{C3}  For each $W_1,W_2\in\mathcal{W}$, 
         there is a nonzero $c\in \k^H$ such that $cW_1\subseteq W_2$. 
        \end{enumerate}
        Then the collection of polytopes $\Pf_{W_\pi}$ for $W\in\mathcal{W}$ and $\pi$ ordered partition of $H$ is a tiling of $\Omega_{r+1}$.  
\end{prop}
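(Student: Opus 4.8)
\emph{The plan is to} verify the two defining features of a polyhedral tiling separately: that the top-dimensional cells $\Pf_W$ for $W\in\mathcal{W}$ cover $\Omega_{r+1}$ with pairwise disjoint relative interiors, and that the full collection $\{\Pf_{W_\pi}\}$ meets face-to-face and is closed under passing to faces. Throughout I regard each $\Pf_W$ as a top cell: since $W$ is simple, $\dim\Pf_W=|H|-1$, so $\Pf_W$ is full-dimensional in $\R^H_{r+1}$, and its proper faces are exactly the $\Pf_{W_\pi}$ for nontrivial ordered partitions $\pi$ (with $\pi=(H)$ recovering $\Pf_W$ itself). Closure under faces then follows from the behaviour of splittings under refinement, since a face of $\Pf_{W_\pi}$ has the form $\Pf_{W_{\pi'}}$ for an ordered partition $\pi'$ refining $\pi$.

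First I would dispose of disjointness of interiors. Given distinct $W_1,W_2\in\mathcal{W}$, assumption \eqref{C3} supplies a nonzero $c\in\k^H$ with $cW_1\subseteq W_2$. If $c$ were an isomorphism it would lie in $(\k^*)^H$ and force $cW_1=W_2$, exhibiting $W_1$ and $W_2$ as equivalent, contrary to hypothesis; hence $c$ is not an isomorphism, and \Cref{prop:1.2} yields $\mathring{\Pf}_{W_1}\cap\mathring{\Pf}_{W_2}=\emptyset$. For the face-to-face property I would apply \eqref{C3} in both directions, obtaining nonzero $c_1,c_2$ with $c_1W_1\subseteq W_2$ and $c_2W_2\subseteq W_1$. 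Writing $J=\{v:c_{1,v}\neq0\}$, the map $c_1$ factors through an injection $W_{1,J}\hookrightarrow W_2^J$, giving $\mu_{W_1}^*(J)\leq\mu_{W_2}(J)$, so $(J^c,J)$ is a separation for $(\mu_{W_1},\mu_{W_2})$; the bipartition attached to $c_2$ separates $(\mu_{W_2},\mu_{W_1})$ analogously. On $\Pf_{W_1}\cap\Pf_{W_2}$ the upper bound $q(J)\leq\mu_{W_1}^*(J)$ and the lower bound $q(J)\geq\mu_{W_2}(J)$ squeeze $q(J)$ to the constant value $\mu_{W_1}^*(J)=\mu_{W_2}(J)$, so the intersection is cut out as a common face of both polytopes. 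This is precisely the content of \cite[Prop.\,3.4]{amini2024tropicalizationlinearseriestilings}, which identifies $\Pf_{W_1}\cap\Pf_{W_2}$ with a common face $\Pf_{(W_1)_\pi}=\Pf_{(W_2)_{\pi^c}}$.

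\emph{The heart of the argument, and the step I expect to be the main obstacle,} is the covering claim $\bigcup_{W\in\mathcal{W}}\Pf_W=\Omega_{r+1}$, which I would establish topologically. Let $V$ be $\mathring{\Omega}_{r+1}$ with the finite union of all codimension-$\geq 2$ faces of all cells deleted; this union is closed and nowhere dense, and since one removes a subset of codimension $\geq 2$ from the connected manifold $\mathring{\Omega}_{r+1}$, the set $V$ stays connected. Now $\Pf_{\mathcal{W}}\cap V$ is closed in $V$ because $\Pf_{\mathcal{W}}$ is a finite union of polytopes, and it is nonempty: by \eqref{C1} some $\mathring{\Pf}_W\cap\mathring{\Omega}_{r+1}$ is a nonempty open set, and intersecting with the complement of a nowhere-dense set keeps it nonempty and inside $V$. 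To see it is open in $V$, take $q\in\Pf_{\mathcal{W}}\cap V$; then $q$ lies in some $\Pf_W$, and as $q$ avoids the codimension-$\geq2$ skeleton it lies either in $\mathring{\Pf}_W$, which is open and contained in $\Pf_{\mathcal{W}}$, or in the relative interior of a facet $\Pf_{W_\pi}$ for a bipartition $\pi=(\pi_1,\pi_2)$. In the latter case $\Pf_{W_\pi}$ meets $\mathring{\Omega}_{r+1}$, so completeness \eqref{C1} furnishes $W'\neq W$ with $\mu_{W'_{\pi^c}}=\mu_{W_\pi}$; since $\Pf_{W_\pi}$ is the facet of $\Pf_W$ on which $q(\pi_1)$ attains its minimum $\mu_W(\pi_1)$ while $\Pf_{W'_{\pi^c}}$ is the facet of $\Pf_{W'}$ on which $q(\pi_1)$ attains its maximum $\mu_{W'}^*(\pi_1)$, the equality of the splittings forces $\mu_W(\pi_1)=\mu_{W'}^*(\pi_1)$ and places the two cells on opposite sides of their common supporting hyperplane, so $\Pf_W\cup\Pf_{W'}$ covers a full neighborhood of $q$.

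Thus $\Pf_{\mathcal{W}}\cap V$ is open and closed in the connected set $V$, hence equals $V$; taking closures and using that $\Pf_{\mathcal{W}}$ is closed while $V$ is dense in $\Omega_{r+1}$ gives $\Pf_{\mathcal{W}}\supseteq\Omega_{r+1}$, the reverse inclusion being immediate. The delicate points to nail down are the connectivity of $V$ and the precise bookkeeping at the facets, namely checking that completeness applies exactly where needed and that the cell it produces genuinely sits on the opposite side of the hyperplane; once the covering, the disjointness of interiors, and the face-to-face property are in hand, the collection $\{\Pf_{W_\pi}\}$ is a polyhedral tiling of $\Omega_{r+1}$.
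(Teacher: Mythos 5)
Your covering argument and your disjointness-of-interiors argument are both sound: the latter is exactly the paper's use of \Cref{prop:1.2} together with nonequivalence, and the former is an open--closed argument on $\mathring{\Omega}_{r+1}$ minus the codimension-$2$ skeleton that essentially reproves the result the paper simply imports as \cite[Prop.\,3.8]{amini2024tropicalizationlinearseriestilings}. The gap is in the face-to-face step. From $c_1W_1\subseteq W_2$ and $c_2W_2\subseteq W_1$ with supports $J_1$ and $J_2$ you correctly obtain separations, and when $\Pf_{W_1}\cap\Pf_{W_2}\neq\emptyset$ the squeeze forces these separations to be non-strict, so the intersection is contained in the face $\F_1$ of $\Pf_{W_1}$ where $q(J_1)$ is maximal and in the face $\F_2$ of $\Pf_{W_2}$ where $q(J_1)$ is minimal. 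But ``contained in a face of each'' is not ``equal to a common face'': $\F_1\cap\F_2$ is an intersection of two polytopes lying in a common hyperplane (or, if $J_2\neq J_1^c$, in two distinct hyperplanes), and nothing so far prevents them from overlapping only partially --- the configuration of two squares sharing half an edge. The citation you lean on does not close this: the version of \cite[Prop.\,3.4]{amini2024tropicalizationlinearseriestilings} that identifies the intersection with a common facet requires the a priori hypothesis that $\Pf_{W_1}$ and $\Pf_{W_2}$ intersect in codimension at most $1$, which you do not have for an arbitrary pair of cells in a tiling. A telltale sign of the gap is that your face-to-face argument never uses completeness, whereas completeness is precisely what rules out partial overlaps.

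The paper closes this hole with a connectivity argument that your proposal is missing. Given $q\in\Pf_{W}\cap\Pf_{W'}$ lying in the relative interior of a face $\F$ of $\Pf_{W}$, it builds a finite chain $W=W_0,W_1,\dots,W_m$ in $\mathcal{W}$ with $\mu_{W_m}=\mu_{W'}$ such that consecutive cells share a genuine common facet containing $\F$: at each step one picks a facet $\F'$ of $\Pf_{W_i}$ containing $\F$ whose spanning hyperplane separates $\mathring{\Pf}_{W_i}$ from $\mathring{\Pf}_{W'}$, checks (using simplicity) that $\F'$ meets $\mathring{\Omega}_{r+1}$ so that completeness applies, and lets completeness produce the next cell on the other side; finiteness of $\mathcal{W}$ terminates the walk. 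This yields $\F\subseteq\Pf_{W'}$ for every such $q$, so $\Pf_W\cap\Pf_{W'}$ is a union of faces of $\Pf_W$ and hence, by convexity, a single face. To repair your proof you would need to supply this chain argument (or an equivalent mechanism that actually invokes completeness) in place of the two-separation squeeze.
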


\begin{proof}
   It follows from \cite[Prop.\,3.8]{amini2024tropicalizationlinearseriestilings} that $\mathring{\Omega}_{r+1} \subseteq \Pf_\mathcal{W}$. Since $\Pf_{W_{\pi}}\subseteq \Omega_{r+1}$ for each $W\in\mathcal{W}$ and each ordered partition $\pi$ of $H$, we have $\Omega_{r+1}=\Pf_{\mathcal{W}}$. We need only show that for each two $W,W'\in \mathcal{W}$, the polytopes $\Pf_{W}$ and $\Pf_{W'}$ either are disjoint or intersect in a common face.
   Assume $\Pf_{W}\cap \Pf_{W'}\neq\emptyset$. We will show that $\Pf_{W}\cap \Pf_{W'}$ is a face of $\Pf_{W}$. Let $q\in \Pf_{W}\cap \Pf_{W'}$. Let $\F$ be the face of $\Pf_{W}$ that contains $q$ in its relative interior. Since $\Pf_{W}\cap \Pf_{W'}$ is convex and $q$ is arbitrary, it suffices to show that $\F\subseteq \Pf_{W'}$. Indeed,  it would follow that $\Pf_{W}\cap \Pf_{W'}$ is a union of faces, so, by convexity, a single face of $\Pf_{W}$. It follows from $(2)$ and \Cref{prop:1.2} that either $W$ and $W'$ are equivalent or the relative interiors $\mathring{\Pf}_{W}$ and $ \mathring{\Pf}_{W'}$ are disjoint. Since the $W$ in the collection $\mathcal{W}$ are nonequivalent, we have $\mathring{\Pf}_{W}\cap \mathring{\Pf}_{W'}=\emptyset$. 

   We claim the existence of a sequence $W_0,\dots,W_m\in \mathcal{W}$ such that $\mu_{W_0}=\mu_{W}$ and $\mu_{W_m}=\mu_{W'}$, and, for each $j=1,\dots,m$, $$(*) \hspace{1cm} \Pf_{W_{j-1}}\cap \Pf_{W_j} \text{ is a common facet of }\Pf_{W_{j-1}} \text{ and }  \Pf_{W_j} \text{ containing } \F.$$ 
   If such sequence exists, then the proposition follows, as $\F\subseteq\Pf_{W'}$. 

   Put $W_0=W$. Assume that we are given a sequence $W_1,\dots,W_i\in\mathcal{W}$ for some $i\geq 0$ such that $(*)$ holds for each $j=1,\dots,i$. Notice that $\F$ is a face of $\Pf_{W_j}$ for each $j$. If $\mu_{W'}=\mu_{W_i}$, the claim is proved. Suppose not. Then, since $W_i$ and $W'$ are nonequivalent, it follows from $(2)$ and \Cref{prop:1.2} that $\mathring{\Pf}_{W_i}\cap \mathring{\Pf}_{W'}=\emptyset$.  Hence, there exists a facet $\F'$ of $\Pf_{W_i}$ containing $\F$ such that $\mathring{\Pf}_{W_i}$ and $\mathring{\Pf}_{W'}$ lie on opposite sides of the hyperplane $H$ spanned by $\F'$. Since $\F'$ is a facet, $\F'=\Pf_{W_{i,\pi}}$ for a certain bipartition $\pi$ of $H$.

   We claim that $\F'\cap \mathring{\Omega}_{r+1}\neq\emptyset$. Assume by contradiction that $\F'\cap \mathring{\Omega}_{r+1}=\emptyset$. Then the hyperplane $H$ is cut out by an equation of the form $q(v)=0$ for a certain $v\in H$. Since $\Pf_{W_i}$ and $\Pf_{W'}$ are contained in $\Omega_{r+1}$, we have that either $\Pf_{W_i}$ or $\Pf_{W'}$ is contained in  $$\left\lbrace q\in \R^H_{r+1} \, \vert \, q(v)\leq 0\right\rbrace\cap \Omega_{r+1}.$$
   This contradict the hypothesis that the $W_i$ and $W'$ are simple. Hence, $\F'\cap\mathring{\Omega}_{r+1}\neq\emptyset$. 

   Since $\mathcal{W}$ is complete for $\mathring{\Omega}_{r+1}$, there is $W_{i+1}\in\mathcal{W}$ distinct from $W_i$ such that $\Pf_{W_{i+1,\pi^c}}=\F'$. As $W_i$ and $W_{i+1}$ is simple, $\mathring{\Pf}_{W_{i+1}}$ and $\mathring{\Pf}_{W_i}$ lie on opposite sides of the hyperplane $H$, thus $\Pf_{W_{i+1}}$ lies on the same side as $\Pf_{W'}$. Since the collection $\mathcal{W}$ is finite, and we ``approach'' $\Pf_{W'}$ at each step, there is $m$ such that $\mu_{W_m}=\mu_{W'}$.

\end{proof}

\section{$\Z^{n}$-quivers and linked nets.}\label{section:2}

In this section, we collect some results about $\Z^n$-quivers and linked nets. We refer to \cite{Esteves22072025}  and \cite{esteves2024quiverrepresentationsarisingdegenerations}, which contain the necessary background. 

\subsection{$\Z^{n}$-quivers.} \label{subsection:2.1}

Let $Q$ be a quiver. Let $T$ be a nontrivial partition of the arrow set of $Q$. Each part $\a$ is called an \textit{arrow type}, and we say $a\in \a$ has type $\a$.

Given a path $\gamma$ in $Q$ and an arrow type $\a$, we denote by $\t_{\gamma}(\a)$ the number of arrows of that type that the path contains. We call $\t_{\gamma}$ the \textit{type} of $\gamma$, and the collection of arrow types $\lbrace \a\vert \t_{\gamma}(\a)>0\rbrace$ its \textit{essential type}, denoted by $\ess(\gamma)$. The path $\gamma$ is called $\textit{admissible}$ if $\t_{\gamma}(\a)=0$ for some $\a$ and \textit{simple} if $\t_{\gamma}(\a)\leq 1$ for every $\a$. . 

A quiver $Q$ with a nontrivial partition $T$ of the set of arrows in $n+1$ parts is a $\Z^{n}$-\textit{quiver} if the following three conditions are satisfied: \begin{enumerate}
    \item There is exactly one arrow of each type leaving each vertex.
    \item Each vertex is connected to each other by an admissible path.
    \item Two paths $\gamma_1$ and $\gamma_2$ leaving the same vertex arrive at the same vertex if and only if $\t_{\gamma_1}-\t_{\gamma_2}$ is a constant function. 
\end{enumerate}
A simple non-admissible path is thus a circuit, called a \textit{minimal circuit}. Two distinct vertices connected by a simple path are called \textit{neighbors}. If $v_1$ and $v_2$ are neighbors and $I$ is the essential type of a simple path connecting $v_1$ to $v_2$ we write $v_2=I\cdot v_1$. 

For a vertex $v$ of a $\Z^n$-quiver $Q$ and a set $I$ of arrow types, the $I-$\textit{cone} of $v$, denoted by $C_{I}(v)$, is the set of end vertices of all admissible paths leaving $v$ and having essential type contained in $I$. 

Recall that the \textit{hull} of a collection of vertices $H$ of a $\Z^n$-quiver $Q$ is the set $P(H)$ of all vertices $v$ of $Q$ such that for each arrow type there are $z\in H$ and a path $\gamma$ connecting $z$ to $v$ not containing any arrow of that type.

For each vertex $v$ in $Q$ there is $w_v\in P(H)$ such that for each $z\in H$ there is an admissible path connecting $z$ to $v$ through $w_v$. Furthermore, $w_v$ is unique if $P(H)=H$. We call $w_v$ the \textit{shadow} of $v$ in $H$. See \cite[Prop.\,5.7]{Esteves22072025} for further details. 
\subsection{Extreme points.}\label{subsection:2.1}
\begin{lem}\label{lem:2.1}
    Let $\a$ be an arrow type of a $\Z^n$-quiver $Q$ and $H$ a finite nonempty collection of vertices. Put $I_\a:=T-\lbrace \a\rbrace$.  The following statements hold:\begin{enumerate}
        \item There exists a vertex $u_{\a}\in H$ such that $C_{I_\a}(u_{\a})\cap H=\lbrace u_\a\rbrace$. 
        \item If $P(H)=H$, then $u_{\a}$ is unique. Furthermore, every admissible path connecting each $u\in H$ to $u_{\a}$ avoids $\a$. 
    \end{enumerate} 
\end{lem}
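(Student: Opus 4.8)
The plan is to control everything through the integer invariant $r(v):=\#\bigl(C_{I_\a}(v)\cap H\bigr)$, defined for $v\in H$; it is finite and $\ge 1$, since $H$ is finite and $v\in C_{I_\a}(v)$ via the trivial path. I will use three elementary facts. \emph{(Monotonicity.)} If $w\in C_{I_\a}(v)$, concatenating an admissible $\a$-avoiding path $v\to w$ with one $w\to z$ produces an admissible $\a$-avoiding path $v\to z$; hence $C_{I_\a}(w)\subseteq C_{I_\a}(v)$ and $r(w)\le r(v)$. \emph{(Antisymmetry.)} For $v\neq w$ one cannot have both $w\in C_{I_\a}(v)$ and $v\in C_{I_\a}(w)$: the two $\a$-avoiding paths would concatenate to a nontrivial closed admissible path at $v$, which by axiom $(3)$ has constant type, forcing that constant — and hence the path — to be trivial (cf.\ \cite[Prop.\,5.1]{Esteves22072025}). \emph{(Uniqueness of type.)} Any two admissible paths with the same endpoints have equal type: by axiom $(3)$ their type difference is a constant, which must vanish because both types are nonnegative with minimum $0$. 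In particular, for fixed endpoints, one admissible path avoids $\a$ if and only if all do.

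For $(1)$ I would take $v\in H$ minimizing $r$ and show $r(v)=1$. Otherwise $r(v)\ge 2$, so there is $w\in C_{I_\a}(v)\cap H$ with $w\neq v$; monotonicity gives $C_{I_\a}(w)\cap H\subseteq C_{I_\a}(v)\cap H$ and antisymmetry gives $v\notin C_{I_\a}(w)$, so the inclusion is strict and $r(w)<r(v)$, contradicting minimality. Thus $u_\a:=v$ works.

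For uniqueness in $(2)$, assume $P(H)=H$ and suppose distinct $u,v\in H$ both have $r=1$. Since $P(\{u,v\})\subseteq P(H)=H$, there are two cases. If $P(\{u,v\})=\{u,v\}$, then $u$ and $v$ are neighbors, joined by simple admissible paths of complementary essential types $I$ and $T-I$; as $\a$ lies in exactly one of these, one direction avoids $\a$ and places one of $u,v$ in the $I_\a$-cone of the other, impossible since $r=1$. Otherwise there is $w\in P(\{u,v\})\subseteq H$ with $w\neq u,v$; fix admissible paths $\gamma\colon u\to w$ and $\gamma'\colon v\to w$ of essential types $K$ and $J$. By uniqueness of type, every type-avoiding path to $w$ from $u$ or $v$ is admissible and so shares the type of $\gamma$ or $\gamma'$ respectively, so the hull condition — which for each type $\b$ supplies an admissible path to $w$ from $u$ or from $v$ omitting $\b$ — forces $\b\notin K$ or $\b\notin J$; thus $K\cap J=\emptyset$. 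Then $\a$ avoids $K$ or $J$, so one of the two paths places $w\neq u,v$ into $C_{I_\a}(u)\cap H$ or $C_{I_\a}(v)\cap H$, contradicting $r(u)=r(v)=1$. Hence $u_\a$ is unique.

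Finally, for a given $u\in H$, the claim that every admissible path $u\to u_\a$ avoids $\a$ is, by uniqueness of type, equivalent to $u_\a\in C_{I_\a}(u)$. This I would obtain by descent: if $r(u)\ge 2$, pick $u_1\neq u$ in $C_{I_\a}(u)\cap H$, so $r(u_1)<r(u)$ and $C_{I_\a}(u_1)\cap H\subseteq C_{I_\a}(u)\cap H$; iterating yields a strictly decreasing sequence of $r$-values ending at a vertex of value $1$, which by uniqueness is $u_\a$, and transitivity of the cone inclusions gives $u_\a\in C_{I_\a}(u)$. I expect the uniqueness step to be the real obstacle: existence and the descent are formal consequences of monotonicity and antisymmetry, whereas uniqueness genuinely needs the convexity $P(H)=H$, entering precisely through the intermediate hull vertex $w$ and the disjointness $K\cap J=\emptyset$ that lets $\a$ be dodged on one of the two sides.
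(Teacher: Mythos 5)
Your proposal is correct and follows essentially the same route as the paper: the counting invariant $r(v)=\#\bigl(C_{I_\a}(v)\cap H\bigr)$ with the minimization argument for existence, the two-case analysis of $P(\{u,v\})$ with disjointness of the essential types $K$ and $J$ for uniqueness, and the strictly decreasing descent for the final claim. The only difference is cosmetic: you spell out the "uniqueness of type" fact and the derivation of $K\cap J=\emptyset$ from the hull condition, details the paper leaves implicit.
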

\begin{proof}
Set $I:=I_{\a}$. 
Since $H$ is finite, for each $v\in H$, the intersection $C_I(v)\cap H$ is nonempty and finite.  Define$$r(v):=\#(C_I(v)\cap H).$$ Consider Statement $(1)$. We aim to show that there exists $v\in H$ such that $r(v)=1$.  Suppose by contradiction that $r(v)\geq 2$ for all $v\in H$. Choose $v\in H$ such that $r(v)$ is minimum. Then there is $w\in C_I(v)\cap H$ such that  $w\neq v$. Note that $C_{I}(w)\subseteq C_I(v)$.  We claim that $v\not\in C_{I}(w)$. Suppose by contradiction that $v\in C_{I}(w)$. Then there is an admissible path connecting $w$ to $v$ avoiding $\a$. But a reverse path avoids $\a$ as well. This contradicts \cite[Prop.\,5.1]{Esteves22072025}. Therefore, $v\not\in C_I(w)$, which implies $r(w)<r(v)$, contradicting the minimality of $r(v)$.  It follows that there exists $u_\a$ such that $C_{I}\cap H=\lbrace u_\a\rbrace$. 
    
    As for $(2)$, suppose by contradiction that $P(H)=H$ and there are two $u,v\in H$ such that $r(u)=r(v)=1$. Observe that $P(\lbrace u,v\rbrace)\subseteq H$. If $P(\lbrace u,v\rbrace)=\lbrace u,v\rbrace$, then  $u$ and $v$ are neighbors, and so either $u\in C_I(v)$ or $v\in C_I(u)$, which is a contradiction.  We may assume there is $w\in P(\lbrace u,v\rbrace)$ distinct from $u$ and $v$. 
    
    Let $\gamma$ and $\gamma'$ be admissible paths leaving $u$ and $v$, respectively, and arriving at $w$. Let $K$ and $J$ be the essential type of $\gamma$ and $\gamma'$, respectively. Since $w\in P(\lbrace u,v\rbrace)$, the sets $K$ and $J$ are disjoint. Then $\a\not\in K$ or $\a\not\in J$, whence $w\in C_{I}(u)$ or $w\in C_I(v)$. This implies $r(u)\geq 2$ or $r(v)\geq 2$. 
    This contradicts the assumption that $r(u)=r(v)=1$. Therefore, the vertex $u_\a$ satisfying $C_{I_\a}(u_\a)\cap H=\lbrace u_\a\rbrace$ is unique. 

    We prove now the second statement in $(2)$, that is, given $w\in H$, any admissible path from $w$ to $u_\a$ has essential type not containing $\a$, or equivalently, $u_\a\in C_{I}(w)$. If $r(w)=1$, then $w=u_\a$. Assume $r(w)\geq 2$. Then there is $w_1\in C_{I}(w)\cap H$ such that $w_1\neq w$. As before, $ C_I(w_1)\cap H\subsetneq C_I(w)\cap H$, and consequently  $r(w_1)<r(w)$. By repeating this argument, we obtain a finite sequence of vertices $w_1,\dots,w_n\in H$  such that  $$C_{I}(w_n)\cap H\subsetneq C_{I}(w_{n-1})\cap H\subsetneq\cdots \subsetneq C_{I}(w_1)\cap H\subsetneq C_I(w) \cap H$$ and $$1=r(w_n)<r(w_{n-1})<\cdots <r(w_1)<r(w).$$ From the uniqueness of $u_\a$ established earlier, it follows that $w_n=u_\a$. Hence $u_\a\in C_I(w)$, completing the proof of $(2)$. \end{proof}


\begin{defn}\label{def:2.2}
Let $Q$ be a $\Z^n$-quiver and $H$ a finite nonempty collection of vertices of $Q$ such that $P(H)=H$. For each arrow type $\a\in T$, the vertex $u_{\a}\in H$ is called the  \textit{extreme vertex} of $H$ with respect to $\a$.  Denote by $\e_H:T\rightarrow H$ the function taking an arrow type $\a$ to the corresponding vertex $u_{\a}\in H$.
\end{defn}

\begin{rem}\label{rem:2.3}
 The notion of extreme vertices turns out to be very closely related to the theory of $v$-reduced divisors on graphs. Let $G$ be a finite, unweighted multigraph having no loop edges. Denote $V(G)$ and $E(G)$, respectively, the set of vertices and edges of $G$. 
 Let $\mathrm{Div}(G)$ be the free Abelian group on $V(G)$. An element of $\mathrm{Div}(G)$ is called a \textit{divisor} on $G$. 

 We define an equivalence relation $\sim$ on the group $\mathrm{Div}(G)$ by declaring $D\sim D'$ if and only if there is a sequence of moves taking $D$ to $D'$ in the chip-firing game. For $D\in \mathrm{Div}(G)$, we define the \textit{linear system associated to} $D$ to be the set $\vert D\vert$ of all effective divisors equivalent to $D$: $$\vert D\vert=\lbrace D'\in\mathrm{Div}(G) \,\big\vert\, D'\geq 0, D'\sim D\rbrace.$$
Let $v\in V(G)$. We say that a divisor $D\in \mathrm{Div}(G)$ is $v$-\textit{reduced} if and only if $(1)$ no vertex $u\neq v$ is in debt; and $(2)$ for every non-empty subset $A$ of $V(G)-\lbrace v\rbrace$, if all vertices in $A$ were to perform a lending move, some vertex in $A$ would go into debt, see \cite{BAKER2007766}. 

The linear system $\vert D\vert$ can be made into a $\Z^n$-quiver as follows: The set of vertices is $\vert D\vert$, and we put an arrow from vertex $D_1$ to vertex $D_2$ if $D_2$ can be obtained from $D_1$ by a chip-firing move. The set of arrow types is identified with $V(G)$.

Let $H$ be the set of effective divisors in $\vert D\vert$. Then $H$ is finite and satisfies $P(H)=H$. Furthermore, a divisor in $\vert D\vert$ is $v$-reduced if and only if it is extreme with respect to $v$. 
\end{rem}

\subsection{Linked nets}\label{subsection:2.3}  
Let $\k$ be a field and call its elements \textit{scalars}. For each representation $\V$ of a quiver $Q$ in a $\k$-Abelian category,  denote by $V_v^{\V}$ the object associated to a  vertex $v$ of $Q$, and  denote by $\varphi^{\V}_{\gamma}$ the composition of morphisms associated to a path $\gamma$ in $Q$. (If $\gamma$ is the trivial path, $\varphi^\V_\gamma=\mathrm{id}$ by convention.) We write $V_v$ and $\varphi_\gamma$ if $\V$ is clear from the context. A representation of $Q$ is called 
\textit{pure} if every epimorphism between two objects among the $V_v$ is an isomorphism. 

Let $H$ be a set of vertices in $Q$ and $m\in\mathbb{N}\cup\lbrace\infty\rbrace$. We say $\V$ is $m$-\textit{generated} by $H$  if for each vertex $v$ of $Q$ there exist $w_1,\dots,w_l\in H$ for $l\in 
\mathbb{N}$ with $l\leq m$  and paths $\gamma_i$ connecting $w_i$ to $v$ for each $i$, such that $$(\varphi_{\gamma_1},\dots,\varphi_{\gamma_{m}}):\bigoplus^m_{i=1} V_{w_i}\longrightarrow V_v$$ is an epimorphism. If $\V$ is $\infty$-generated by a finite set, we say $\V$ is \textit{finitely generated}. Then $\V$ is $1$-generated by a finite set by \cite[Prop.\,6.4]{Esteves22072025}. 

If $Q$ is a $\Z^n$-quiver, we say $\V$ is a \textit{weakly linked net} over $Q$ if $\V$ satisfies the following two conditions: \begin{enumerate}
    \item If $\gamma_1$ and $\gamma_2$ are two paths connecting the same two vertices and $\gamma_2$ is admissible then $\varphi_{\gamma_1}$ is a scalar multiple of $\varphi_{\gamma_2}$;
    \item $\varphi_{\gamma}=0$ for each minimal circuit $\gamma$.
\end{enumerate}    
We say that $\V$ is a \textit{linked net} if in addition a third condition is verified: 
\begin{enumerate} 
    \item[(3)] If $\gamma_1$ and $\gamma_2$ are two admissible paths leaving the same vertex without arrow type in common, then $\ker(\varphi_{\gamma_1})\cap \ker(\varphi_{\gamma_2})=0.$ 
\end{enumerate}
For each vector $w$ in a vector space, we will denote by $[w]$ the set of its scalar multiples. And for each linear map $\varphi$, we denote $\ker[\varphi]:=\ker(\varphi)$ and $\im[\varphi]:=\im(\varphi)$.

Given two vertices $u_1$ and $u_2$ of $Q$, let $\varphi^{u_1}_{u_2}:=[\varphi_{\gamma}]=\k\varphi_\gamma$ for any admissible path $\gamma$ connecting $u_1$ to $u_2$. We say  $\V$ is \textit{exact} if $\im(\varphi^{u_1}_{u_2})=\ker(\varphi^{u_2}_{u_1})$ for each two neighbors $u_1$ and $u_2$. 

\begin{rem}
   Let $\V$ be an exact linked net of vector spaces over a $\Z^n$-quiver $Q$. Then, exactness together with the rank-nullity theorem imply that $\V$ is pure.
\end{rem}

The following lemma extends \cite[Prop.\,10.7]{esteves2024quiverrepresentationsarisingdegenerations}.
\begin{lem}\label{lem:2.5}
    Let $\V$ be a nontrivial finitely generated exact linked net of vector spaces over $\k$. Let $H$ be the intersection of all
collections of vertices $1$-generating $\V$. Then $\V$ is $1$-generated by $H$.
Furthermore, 
for each $v,u\in H$, 
we have $\varphi^v_u\neq 0$. Also, $P(H)=H$.  
\end{lem}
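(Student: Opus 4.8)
The plan is to prove the three assertions in turn, letting the non-vanishing $\varphi^v_u\neq 0$ carry the proof of $P(H)=H$. I would use throughout that $\V$ is pure (being exact), so that a vertex $z$ $1$-generates a vertex $x$ exactly when $\varphi^z_x$ is an isomorphism. That $\V$ is $1$-generated by $H$ is the assertion that the intersection of all $1$-generating collections is again $1$-generating; since $\V$ is finitely generated it is $1$-generated by a finite set, and I would invoke the corresponding result of \cite{Esteves22072025} (this is precisely the part of \cite[Prop.\,10.7]{esteves2024quiverrepresentationsarisingdegenerations} that the present lemma extends). In particular every vertex $x$ admits some $z\in H$ with $\varphi^z_x$ an isomorphism.

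The crux is the base case of the non-vanishing: for $v\in H$ and each arrow type $\a$, writing $u_\a:=\e_{P(H)}(\a)$, one has $\varphi^v_{u_\a}\neq 0$. I would assume $\varphi^v_{u_\a}=0$ and derive a contradiction with the minimality of $H$. Pick an admissible $\gamma\colon v\to u_\a$, set $J:=\ess(\gamma)$ (so $\a\notin J$), and let $w$ be the shadow of $J\cdot v$ in $P(H)$; by purity and $1$-generation by $P(H)$ the map $\varphi^w_{J\cdot v}$ is an isomorphism (\cite[Prop.\,5.7]{Esteves22072025}). Factoring the path $v\to u_\a$ through $J\cdot v$ and the path $v\to J\cdot v$ through $w$ collapses the kernels, $\ker\varphi^v_{u_\a}=\ker\varphi^v_{J\cdot v}=\ker\varphi^v_w$, and exactness rewrites the last kernel as $\im\varphi^w_v$. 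Hence $\im\varphi^w_v=V_v$, so $\varphi^w_v$ is an isomorphism with $w\neq v$; then $H':=(H\setminus\{v\})\cup\{w\}$ is again $1$-generating (transport the role of $v$ along $\varphi^w_v$, using $\varphi^v_x\varphi^w_v=\varphi^w_x$), contradicting $H\subseteq H'$ because $v\notin H'$. I expect this to be the main obstacle: the sensitive step is the equality $\ker\varphi^v_{u_\a}=\ker\varphi^v_{J\cdot v}$, which rests on the injectivity of the ``outward'' map $\varphi^{J\cdot v}_{u_\a}$ furnished by the extremeness of $u_\a$, together with an exactness statement valid beyond neighboring vertices.

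From the base case I would obtain $\varphi^v_u\neq 0$ for all $v\in H$ and $u\in P(H)$ as follows. Given an admissible $\gamma\colon v\to u$ with $J:=\ess(\gamma)\subsetneq T$, choose $\a\notin J$ and, using \Cref{lem:2.1}(2) for the convex set $P(H)$, prolong $\gamma$ by an admissible path $u\to u_\a$ avoiding $\a$. The concatenation avoids $\a$, hence is admissible, so $\varphi^v_{u_\a}$ equals $\varphi^u_{u_\a}\circ\varphi^v_u$ up to a scalar; were $\varphi^v_u=0$ this would give $\varphi^v_{u_\a}=0$, against the base case. Since $H\subseteq P(H)$, taking $u\in H$ yields the desired $\varphi^v_u\neq 0$ for all $u,v\in H$.

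It remains to prove $P(H)=H$, for which I would adapt the argument of \cite[Prop.\,10.7]{esteves2024quiverrepresentationsarisingdegenerations}. Suppose $z\in P(H)\setminus H$. Choose $w\in H$ with $\varphi^w_z$ an isomorphism, an admissible $\gamma\colon w\to z$, and an arrow type $\mathfrak{b}\in\ess(\gamma)$; set $u_{\mathfrak{b}}:=\e_{P(H)}(\mathfrak{b})$ and take an admissible $\mu\colon z\to u_{\mathfrak{b}}$. Since $\mu\gamma$ reaches $u_{\mathfrak{b}}$ from $w\in H$ using an arrow of type $\mathfrak{b}$, \Cref{lem:2.1}(2) shows it is non-admissible, so its composite vanishes and $\varphi^z_{u_{\mathfrak{b}}}\varphi^w_z=0$, whence $\varphi^z_{u_{\mathfrak{b}}}=0$. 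On the other hand $\varphi^v_{u_{\mathfrak{b}}}\neq 0$ for every $v\in H$ by the non-vanishing, so no admissible path $v\to u_{\mathfrak{b}}$ factors through $z$; as every admissible path $z\to u_{\mathfrak{b}}$ avoids $\mathfrak{b}$ (again \Cref{lem:2.1}(2)), every admissible path $v\to z$ must use an arrow of type $\mathfrak{b}$. Thus $z$ cannot be reached from $H$ by a $\mathfrak{b}$-avoiding path, i.e. $z\notin P(H)$ — a contradiction.
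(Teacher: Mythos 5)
Your proposal is correct and follows essentially the same route as the paper's proof: the same citation for $1$-generation by $H$, the same contradiction argument at extreme vertices (collapsing kernels via the shadow $w$ of $J\cdot v$, using exactness to get $\im\varphi^w_v=V_v$ and swapping $v$ for $w$ in $H$), the same extension-to-an-extreme-vertex trick for the general non-vanishing, and the same argument that a putative $z\in P(H)-H$ would force every admissible path from $H$ to $z$ to use type $\mathfrak{b}$. The only difference is a harmless reordering (you prove the general non-vanishing before $P(H)=H$, the paper does the reverse), which does not change the substance.
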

\begin{proof} 
That $\V$ is $1$-generated by $H$ follows from \cite[Prop.\,6.3]{Esteves22072025}. 
Set $u_\a:=\e_{P(H)}(\a)$ for each $a\in T$. 

As for the second statement, let $v\in H$ and $\a\in T$, 
assume by contradiction that $\varphi^v_{u_\a}=0$. 
Let $\gamma$ be an admissible path connecting $v$ to $u_\a$ and $J$ be its essential type. Then $V_v=\ker(\varphi^v_{J\cdot v})$ by \cite[~Lem.\,6.6]{Esteves22072025}. Let $w$ be the shadow of $J\cdot v$ in $P(H)$. Then $w$ is $v$ or neighbor of $v$ and $\varphi^v_{J\cdot v}=\varphi^w_{J\cdot v}\varphi^v_w$. Since $\V$ is $1$-generated by $H$, it follows from \cite[Prop.\,6.4]{Esteves22072025} that $\V$ is $1$-generated by $P(H)$. Notice that $\V$ is pure, since it is exact. Hence, $\varphi^{w}_{J\cdot v}$ is an isomorphism. Then $w\neq v$ and $\ker(\varphi^v_{J\cdot v})=\ker(\varphi^v_w)$. It follows from exactness of $\V$ that  $$V_v=\ker(\varphi^v_{u_\a})=\ker(\varphi^v_{J\cdot v})=\ker(\varphi^v_{w})=\text{Im}(\varphi^w_v).$$ Put $H':=(H-\lbrace v\rbrace)\cup\lbrace w\rbrace$. We claim that $\V$ is $1$-generated by $H'$. Indeed, let $u$ be a vertex of $Q$. By the first statement, there is $z\in H$ such that $\varphi^z_u$ is an isomorphism. If $z\neq v$, we are done. If $z=v$, then the composition $\varphi^v_u\varphi^w_v$ is an isomorphism. Hence, by \cite[Lem 6.2]{Esteves22072025}, $\varphi^v_u\varphi^w_v=\varphi^w_u$, whence $\varphi^w_u$ is also an isomorphism, proving the claim. But then $H\subseteq H'$ by definition of $H$, a contradiction.


We will show first that $P(H)=H$. Assume by contradiction that there exists $z\in P(H)-H$. We claim that there is an arrow type $\mathfrak{b}\in T$ such that $\varphi^z_{u_\mathfrak{b}}=0$. Indeed, since $\V$ is pure, it follows from the first statement that there exists $w\in H$ such that $\varphi^w_z$ is an isomorphism. Let $\gamma$ be an admissible path connecting $w$ to $z$ and $\mathfrak{b}\in \ess(\gamma)$. Consider an admissible path $\mu$ connecting $z$ to $u_\mathfrak{b}$. It follows from \Cref{lem:2.1} that $\mu\gamma$ is not admissible. Hence, by \cite[Lem. 6.1]{Esteves22072025}, $\varphi^z_{u_\mathfrak{b}}\varphi^w_z=0$. Thus, $$\varphi^z_{u_\mathfrak{b}}(V_z)=\varphi^z_{u_\mathfrak{b}}\varphi^w_z(V_w)=0,$$  
whence $\varphi^z_{u_\mathfrak{b}}=0$ as required. 

Let $v\in H$. Then $\varphi^{v}_{u_\mathfrak{b}}\neq 0$ as proved above. Hence, there is no admissible path connecting $v$ to $u_\mathfrak{b}$ through $z$. Since $u_\mathfrak{b}$ is extreme for $P(H)$, every admissible path connecting $z$ to $u_\mathfrak{b}$ avoids $\mathfrak{b}$. Thus, every admissible path connecting $v$ to $z$ contains an arrow of type $\mathfrak{b}$. Since $v$ is arbitrary, it follows that $z\not\in P(H)$, a contradiction, showing that $P(H)=H$. 

 Now, assume by contradiction that $\varphi^v_u=0$ for certain $u,v\in H$. Since $\V$ is nontrivial, we have that $u\neq v$. Let $\gamma$ be an admissible path connecting $v$ to $u$, and let $\a\not\in\ess(\gamma)$. Let $\gamma'$ be an admissible path connecting $u$ to $e_H(\a)$. Then $\gamma'\gamma$ is admissible by \Cref{lem:2.1}, and
    $$\varphi^v_{e_H(\a)}=\varphi^u_{e_H(\a)}\varphi^v_u=0,$$
    contradicting what we proved above.

\end{proof}

\begin{defn}\label{def:2.6}
    Let $\V$ be an exact finitely generated linked net of vector spaces. Let $H$ be the intersection of all collections of vertices $1$-generating $\V$. We call $H$ the \textit{minimum set of generators} of $\V$. 
\end{defn}

\section{Linked nets and Polytopes.} \label{section:3}

\subsection{Adjoint modular pairs associated to linked nets.}\label{subsection:3.1}

Let $\V$ be a nontrivial finitely generated exact linked net of vector spaces over a $\Z^n$-quiver $Q$, and let $H$ be its minimum set of generators.

In the following, we will choose, for each $v,u\in H$, an admissible path $\gamma^v_u$ connecting $v$ to $u$. When the starting (ending) point is clear we write $\gamma_u$ (resp. $\gamma^v$).

For each $v\in H$ and for each nonempty subset $I\subseteq H$, define \begin{equation}
    \Psi^v_I:V_v   \longrightarrow   U_I:=\bigoplus_{u\in I} V_u \tag{3.1}\label{eq:3.1}
\end{equation} as $ \Psi^v_I(s):=(\varphi_{\gamma_u}(s) \vert u\in I)$. Set $U:=U_H$.
Let $W_v\subseteq U$ be the image of $\Psi^v_H$, that is, $$W_v:=\left \lbrace\Psi^v_H(s) \,\vert\, s\in V_v\right\rbrace.$$
 The maps $\Psi^v_I$ depends on the choice of an admissible path $\gamma_u$ connecting $v$ to $u$ for each $u\in I$. Different choices produce different subspaces $W_v$. However, since $\V$ is a linked net, all the $W_v$ obtained via this process lie in the same orbit by the natural action of $\prod_{v\in H} \k^\times$ defined in \Cref{subsection: 1.3}.

For each $v\in H$, we denote by $(\mu_v^{\V},\mu^{*,\V}_v)$ the modular pair corresponding to $W_v$, defined in \cref{subsection: 1.3}, and denote $\Pf_{\V,v}$ the corresponding base polytope. If $\V$ is clear from the context, we simply write $(\mu_v,\mu^{*}_v)$ and $\Pf_v$. The pair $(\mu_v,\mu^{*}_v)$ does not depend on the choice of admissible paths connecting $v$ to $u\in H$. In fact, for each $I\subseteq H$, 
$$\mu_v(I)=\dim\left(\bigcap_{u\in H-I} \ker(\varphi^v_u)\right) \text{ and } \mu^{*}_v(I)=\dim \left(V_v/\bigcap_{u\in I} \ker(\varphi^v_u)\right)$$

The following statement follows directly from \Cref{lem:1.1} and \Cref{lem:2.5}.
\begin{prop} \label{prop:3.1}
    Let $\V$ be a nontrivial finitely generated exact linked net of vector spaces, and let $H$ be its minimum set of generators. For each $v\in H$, the supermodular function $\mu_v$ is simple. 
\end{prop}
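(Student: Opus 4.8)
The plan is to verify that the fixed-basepoint function $\mu_v$ meets every hypothesis of \Cref{lem:1.1} and then quote that lemma. First I would record the two structural facts that are already available: $\mu_v = \mu_{W_v}$ is supermodular by the discussion in \Cref{subsection: 1.3} (it is the first member of the modular pair attached to $W_v$), and it is nonnegative because each value $\mu_v(I) = \dim\bigl(\bigcap_{u \in H-I}\ker(\varphi^v_u)\bigr)$ is the dimension of a subspace. Thus $\mu_v$ is a nonnegative supermodular function, and \Cref{lem:1.1} will apply as soon as its two numerical conditions are checked for $\mu_v$; both of these I expect to extract directly from \Cref{lem:2.5}.

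For condition \eqref{item:uno}, I would use the explicit formula $\mu^*_v(\{w\}) = \dim\bigl(V_v/\ker(\varphi^v_w)\bigr)$. Since both $v$ and $w$ lie in the minimum set of generators $H$, \Cref{lem:2.5} gives $\varphi^v_w \neq 0$, so $\ker(\varphi^v_w)$ is a proper subspace of $V_v$ and hence $\mu^*_v(\{w\}) \geq 1 > 0$ for every $w \in H$. This is exactly hypothesis \eqref{item:uno}.

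For condition \eqref{item:dos}, I would take the witness $v_0 = v$ itself. As $v \in H$, the admissible path from $v$ to $v$ may be taken trivial, so $\varphi^v_v$ is a scalar multiple of the identity and $\ker(\varphi^v_v) = 0$. On the one hand this gives $\mu^*_v(\{v\}) = \dim V_v$; on the other hand $\bigcap_{u \in H}\ker(\varphi^v_u) \subseteq \ker(\varphi^v_v) = 0$, so $\mu^*_v(H) = \dim V_v$ as well. Therefore $\mu^*_v(\{v\}) = \mu^*_v(H)$, which is hypothesis \eqref{item:dos} with $v_0 = v$. With both conditions in hand, \Cref{lem:1.1} yields that $\mu_v$ is simple.

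I do not expect a genuine obstacle: once the explicit descriptions of $\mu_v$ and $\mu^*_v$ from \Cref{subsection:3.1} are in play, the whole argument is a translation of the two conclusions of \Cref{lem:2.5} — namely $\varphi^v_w \neq 0$ for all $w \in H$, and $\ker(\varphi^v_v) = 0$ — into the hypotheses of \Cref{lem:1.1}. The only point deserving a word of care is that $v$ plays a double role, as the basepoint defining $\mu_v$ and as the witness $v_0$; this causes no trouble precisely because $\mu^*_v(\{v\})$ already attains the full dimension $\dim V_v = \mu^*_v(H)$, the latter equality itself being a consequence of $v \in H$.
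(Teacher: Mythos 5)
Your proposal is correct and takes essentially the same route as the paper, whose entire proof is the remark that the proposition ``follows directly from \Cref{lem:1.1} and \Cref{lem:2.5}.'' You have simply made explicit the intended verification: nonnegativity and supermodularity of $\mu_v$ from \Cref{subsection: 1.3}, hypothesis \eqref{item:uno} of \Cref{lem:1.1} from $\varphi^v_w\neq 0$ in \Cref{lem:2.5}, and hypothesis \eqref{item:dos} with the witness $v_0=v$ via $\ker(\varphi^v_v)=0$.
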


   
\subsection{Face structure.}\label{subsection:3.2}
Let $\V$ be a nontrivial finitely generated exact linked net of vector  spaces of dimension $r+1$ over a $\Z^n$-quiver $Q$, and let $H$ be its minimum set of generators. Recall that $P(H)=H$. For each $v\in H$, let $\Pf_v$ be the polytope of the modular pair $(\mu_v,\mu_v^*)$. In this section, we give a characterization of the faces of $\Pf_v$ that intersect $\mathring{\Omega}_{r+1}$, where $\Omega_{r+1}$ is the standard simplex in $\R^H_{r+1}$, as defined in \Cref{subsection: 1.3}.
\subsubsection{Polygons.}\label{subsubsection:3.2.1}
    Let $H$ be a set of vertices of $Q$ such that $P(H)=H$. Recall that, for each vertex $u$ of $Q$, there is a unique vertex $w_u\in H$ such that for every $v\in H$ there is an admissible path connecting $v$ to $u$ factoring through $w_u$ \cite[Prop.\,5.7]{Esteves22072025}. 
    We say that $w_u$ is the \textit{shadow} of $u$ in $H$. For each $v\in H$, let $R_v^H$ be the set of vertices of $Q$ whose shadow in $H$ is $v$; we write $R_v$ when there is no ambiguity. We call $R_v^H$ the \textit{shadow region of $v$ in $H$}. The collection of all shadow regions form a partition of the set of vertices of $Q$ which we call the \textit{shadow partition of} $H$. 
  
  Recall that a \textit{polygon} is a collection of pairwise neighboring vertices of $Q$. A polygon with $m$ vertices is called a $m$-$\textit{gon}$. Given a vertex $v$ of $Q$, a way of constructing a $m$-gon containing $v$ is to pick a nontrivial ordered $m$-partition $T=I_1\cup\cdots\cup I_m$ of the set of arrow types $T$ of $Q$, and put $v_1:=v$ and $v_{i+1}:=I_i\cdot v_{i}$ for $i=1,\dots,m-1$. Then $\lbrace v_1,\dots,v_m\rbrace$ is a $m$-gon We say that $v_1,\dots,v_m$ form an \textit{oriented $m$-gon}.  
  
  Let $\Delta$ be a $m$-gon in $Q$ and $v\in \Delta$. There is a unique ordering $v_1,\dots,v_m$ of the vertices of $\Delta$ such that $v=v_1$ and $v_1,\dots,v_m$ form an oriented $m$-gon  \cite[Prop.\,5.9]{Esteves22072025}. Recall that for each polygon $\Delta$ in $Q$, we have $P(\Delta)=\Delta$ \cite[Prop.\,5.10]{Esteves22072025}. Therefore, the shadow partition of $\Delta$ induces a partition $\pi$ of $H$ whose parts are the $ R_{v}^\Delta\cap H$ for $v\in \Delta$. Order the partition $\pi$ according to the associated oriented $m$-gon, i.e., put $\pi_i:=R_{v_i}^\Delta\cap H$. We call $\pi$ the \textit{ordered partition induced by $\Delta$ and $v$}. If $\Delta\subseteq H$, then $v_i\in \pi_i$ for each $i$. Hence, the ordered partition $\pi$ is a $m$-partition. 

The following lemmas are straightforward consequences of the definitions. Since they will be used repeatedly in the sequel, we state and prove them here for reference.
\begin{lem}\label{lem:3.2}
    Let $v_1,\dots,v_m$ be vertices of $Q$ forming an oriented $m$-gon $\Delta$. Put $v_{m+1}:=v$, and let $i\in \lbrace 1,\dots,m\rbrace$ and $u\in H-R_{v_i}^\Delta$. Then there exists an admissible path connecting $v_i$ to  $u$ passing through $v_{i+1}$. 
\end{lem}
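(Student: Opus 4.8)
The plan is to use the defining property of the shadow to produce a single admissible path from $v_i$ to $u$, and then to reroute its initial segment along the oriented boundary of $\Delta$ so that it passes through $v_{i+1}$ while its essential type stays unchanged. The point is that \emph{existence} of some admissible path into the region is cheap (it is exactly what the shadow gives), and the whole content is that one may arrange it to leave $v_i$ in the oriented direction.

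First, since $u\in H-R_{v_i}^\Delta$, the shadow of $u$ in $\Delta$ is some vertex $v_j$ with $j\neq i$. Applying the defining property of the shadow to $\Delta$ (which satisfies $P(\Delta)=\Delta$) with starting vertex $v_i\in\Delta$, there is an admissible path $\delta$ from $v_i$ to $u$ factoring through $v_j$; write $\delta=\delta_2\delta_1$, where $\delta_1$ runs from $v_i$ to $v_j$ and $\delta_2$ runs from $v_j$ to $u$. As a subpath of the admissible path $\delta$, the path $\delta_1$ is itself admissible.

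Next, let $\sigma$ be the oriented forward arc $v_i\to v_{i+1}\to\cdots\to v_j$ of the polygon, read cyclically with $v_{m+1}=v_1$, whose essential type is $I_i\cup\cdots\cup I_{j-1}$. Since the partition is nontrivial and $j\neq i$, the complementary types $I_j,\dots,I_{i-1}$ are nonempty, so $\ess(\sigma)$ is a proper subset of $T$; thus $\sigma$ is admissible and it manifestly passes through $v_{i+1}$. The key claim is that $\sigma$ and $\delta_1$ have the same essential type: both connect $v_i$ to $v_j$, so by condition (3) in the definition of a $\Z^n$-quiver their type functions $\t_\sigma$ and $\t_{\delta_1}$ differ by a constant function; since $\t_\sigma$ is $\{0,1\}$-valued and both type functions are nonnegative and avoid at least one arrow type (admissibility), that constant must be $0$, whence $\ess(\sigma)=\ess(\delta_1)$.

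Finally, I would splice: the concatenation $\delta_2\sigma$ (first $\sigma$, then $\delta_2$) runs from $v_i$ to $u$, passes through $v_{i+1}$, and has essential type $\ess(\sigma)\cup\ess(\delta_2)=\ess(\delta_1)\cup\ess(\delta_2)=\ess(\delta)\subsetneq T$, so it is admissible, as desired. I expect the main obstacle to be exactly this rerouting step: one must guarantee that replacing $\delta_1$ by the oriented arc $\sigma$ does not enlarge the essential type and thereby destroy admissibility. This is precisely where the rigidity of $\Z^n$-quivers is essential — namely that the type function of any path between two fixed vertices is determined up to an additive constant — together with the nonnegativity of arrow counts, which pins that constant to $0$ and forces $\ess(\sigma)=\ess(\delta_1)$.
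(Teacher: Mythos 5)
Your proposal is correct and follows essentially the same route as the paper: identify the shadow $v_j$ of $u$ in $\Delta$ (with $j\neq i$), replace the initial segment of the shadow path by the oriented arc $v_i\to v_{i+1}\to\cdots\to v_j$, whose essential type is $I_i\cup\cdots\cup I_{j-1}$, and concatenate. The only difference is that you spell out the rigidity step (condition (3) of a $\Z^n$-quiver forcing the two segments from $v_i$ to $v_j$ to have equal type, hence preserving admissibility of the spliced path), which the paper leaves implicit.
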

\begin{proof}
 Let $v_j$ be the shadow of $u$ in $H$. It is sufficient to show that there is an admissible path from $v_i$ to $v_j$ factoring through $v_{i+1}$. Since $v_1,\dots,v_m$ form an oriented $m$-gon, there exists a $m$-partition $T=I_1\cup\cdots \cup I_m$ of set of arrows $T$ of $Q$ such that $v_{i+1}=I_i\cdot v_i$ for $i=1,\dots, m$. We need only show that $\ess(\gamma^{v_i}_{v_j})$ contains $I_i$. 
By assumption, either $j>i$ or $j<i$. If $j>i$, then $\ess(\gamma^{v_i}_{v_j})=I_{i}\cup\cdots \cup I_{j-1}$. If $j<i$, then $\ess(\gamma^{v_i}_{v_j})=I_{i}\cup\cdots\cup I_m\cup I_1\cup\cdots\cup I_{j-1}$. In any case, $\ess(\gamma^{v_i}_{v_j})$ contains $I_i$, as required.

    \end{proof}

\begin{lem}\label{lem:3.3}
    Let $\V$ be a nontrivial exact finitely generated linked net over a $\Z^n$-quiver $Q$. Let $v_1,\dots, v_m$ form an oriented $m$-gon. Then the sequence
    $$0\rightarrow \ker(\varphi^{v_1}_{v_i})\rightarrow \ker(\varphi^{v_1}_{v_{i+1}})\rightarrow \ker(\varphi^{v_i}_{v_{i+1}})\rightarrow 0 $$
    is exact, where the second map is the inclusion and the third map is $\varphi^{v_1}_{v_{i}}.$
\end{lem}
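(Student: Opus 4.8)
The plan is to check directly that the three-term sequence is a well-defined complex, dispose of the two boundary exactness conditions for free, and then spend the real effort on surjectivity of the right-hand map. I fix the ordered $m$-partition $T=I_1\cup\cdots\cup I_m$ of arrow types defining the oriented $m$-gon, so that $v_{j+1}=I_j\cdot v_j$, and I restrict to $1\le i\le m-1$; the case $i=m$ is degenerate and must be excluded, since there $\varphi^{v_1}_{v_{i+1}}$ becomes the identity and the asserted sequence fails. The engine of the proof is a pair of factorizations coming from \cite[Lem.\,6.2]{Esteves22072025}. Concatenating an admissible path $v_1\to v_i$, of essential type $I_1\cup\cdots\cup I_{i-1}$, with the path $v_i\to v_{i+1}$ of essential type $I_i$ produces an admissible path $v_1\to v_{i+1}$ (its essential type $I_1\cup\cdots\cup I_i$ omits the nonempty blocks $I_{i+1},\dots,I_m$), so that, for suitable representatives,
$$\varphi^{v_1}_{v_{i+1}}=\varphi^{v_i}_{v_{i+1}}\circ\varphi^{v_1}_{v_i}.$$
Symmetrically, concatenating $v_i\to v_{i+1}$ with the admissible path $v_{i+1}\to v_1$ of essential type $I_{i+1}\cup\cdots\cup I_m$ gives an admissible path $v_i\to v_1$ (its essential type $I_i\cup\cdots\cup I_m$ omits $I_1,\dots,I_{i-1}$), so that $\varphi^{v_i}_{v_1}=\varphi^{v_{i+1}}_{v_1}\circ\varphi^{v_i}_{v_{i+1}}$.

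From the first identity all the formal parts follow immediately, and I would dispatch them first. It gives $\ker(\varphi^{v_1}_{v_i})\subseteq\ker(\varphi^{v_1}_{v_{i+1}})$, so the inclusion is legitimate and injective, settling exactness at the left term; and for $s\in\ker(\varphi^{v_1}_{v_{i+1}})$ it yields $\varphi^{v_i}_{v_{i+1}}\bigl(\varphi^{v_1}_{v_i}(s)\bigr)=\varphi^{v_1}_{v_{i+1}}(s)=0$, so $\varphi^{v_1}_{v_i}$ indeed maps $\ker(\varphi^{v_1}_{v_{i+1}})$ into $\ker(\varphi^{v_i}_{v_{i+1}})$, making the third map well-defined. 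Exactness in the middle is then purely set-theoretic: the kernel of the third map is $\ker(\varphi^{v_1}_{v_{i+1}})\cap\ker(\varphi^{v_1}_{v_i})=\ker(\varphi^{v_1}_{v_i})$, which is exactly the image of the inclusion.

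The crux, and the one place where I expect genuine work, is surjectivity of $\varphi^{v_1}_{v_i}\colon\ker(\varphi^{v_1}_{v_{i+1}})\to\ker(\varphi^{v_i}_{v_{i+1}})$; here I would combine exactness of $\V$ with the second factorization. Because $v_1$ and $v_i$ both lie in the $m$-gon they are neighbors, so exactness of $\V$ gives $\im(\varphi^{v_1}_{v_i})=\ker(\varphi^{v_i}_{v_1})$, while the backward factorization gives $\ker(\varphi^{v_i}_{v_{i+1}})\subseteq\ker(\varphi^{v_i}_{v_1})$. Hence every $t\in\ker(\varphi^{v_i}_{v_{i+1}})$ lies in $\im(\varphi^{v_1}_{v_i})$, so $t=\varphi^{v_1}_{v_i}(s)$ for some $s\in V_{v_1}$; the first factorization then forces $\varphi^{v_1}_{v_{i+1}}(s)=\varphi^{v_i}_{v_{i+1}}(t)=0$, so the chosen preimage $s$ already lies in $\ker(\varphi^{v_1}_{v_{i+1}})$ and witnesses $t$ in the image of the third map. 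The decisive observation is thus that routing $\varphi^{v_i}_{v_1}$ through $v_{i+1}$ nests $\ker(\varphi^{v_i}_{v_{i+1}})$ inside $\ker(\varphi^{v_i}_{v_1})$, which exactness of $\V$ identifies with $\im(\varphi^{v_1}_{v_i})$; everything else is formal.
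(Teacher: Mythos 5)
Your proof is correct and follows essentially the same route as the paper's: the formal parts come from the factorization $\varphi^{v_1}_{v_{i+1}}=\varphi^{v_i}_{v_{i+1}}\circ\varphi^{v_1}_{v_i}$, and surjectivity from exactness of $\V$ at neighboring vertices of the polygon combined with admissibility of the concatenated paths. The only cosmetic difference is that the paper establishes surjectivity in one line by computing $\varphi^{v_1}_{v_i}(\ker(\varphi^{v_1}_{v_{i+1}}))=\varphi^{v_1}_{v_i}(\mathrm{Im}(\varphi^{v_{i+1}}_{v_1}))=\mathrm{Im}(\varphi^{v_{i+1}}_{v_i})=\ker(\varphi^{v_i}_{v_{i+1}})$, i.e.\ using exactness at the pairs $(v_1,v_{i+1})$ and $(v_i,v_{i+1})$, whereas you lift elements through exactness at $(v_1,v_i)$; your remark that the index $i=m$ must be excluded is a fair reading of the implicit hypotheses.
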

\begin{proof}
We need only show that the third map is surjective. Since $\V$ is exact, we have that $$\varphi^{v_1}_{v_{i}}( \ker(\varphi^{v_1}_{v_{i+1}}))= \varphi^{v_1}_{v_{i}}( \mathrm{Im}(\varphi^{v_{i+1}}_{v_1}))=\mathrm{Im}(\varphi^{v_{i+1}}_{v_{i}})=\ker(\varphi^{v_i}_{v_{i+1}}).$$
\end{proof}


\subsubsection{Faces induced by polygons.}\label{subsubsection:3.2.2}
  Let $\V$ be an exact finitely generated nontrivial linked net of vector spaces of dimension $r+1$ over a $\Z^n$-quiver $Q$, and let $H$ be its minimum set of generators.  Let $\Delta\subseteq H$ be a polygon. We associate to $\Delta$ a modular pair $(\mu_\Delta,\mu_\Delta^*)$ as follows. 
  First, let $V_\Delta$ be the following direct sum of vector spaces:
  $$V_\Delta:=\bigoplus_{v\in \Delta} \left(\bigcap_{w\in H-R_v^\Delta}\ker(\varphi^v_w)\right).$$
  Next, define the linear map $\Psi^\Delta_H:V_\Delta\rightarrow U$ sending $(s_v\, \vert\, v\in \Delta)$ to  $( \varphi^{w_u}_u(s_{w_u}) \,\vert\, u\in H)$, where $w_u$ is the shadow of $u$ in $\Delta$. Let $W_\Delta:=\Psi^\Delta_H(V_\Delta)$, the image of $\Psi^\Delta_H$, and $(\mu_\Delta,\mu_\Delta^*)$ the associated modular pair, as defined in \Cref{subsection: 1.3}. Let $\Pf_\Delta$ be the base polytope of $\mu_\Delta$.

\begin{lem}\label{lem:3.5} Let $H$ be the minimum set of generators of $\V$. Let $\Delta\subseteq H$ be a $m$-gon and $v\in \Delta$. Let $\pi$ be the $m$-partition of $H$ induced by $\Delta$ and $v$, and let $W_{v,\pi}$ be the splitting of $W_v$ associated to $\pi$, and $\Pf_{v,\pi}$ the corresponding face of $\Pf_v$. The following statements hold:\begin{enumerate} 
    \item $\mathrm{cd}_{\mu_{\Delta}}=m;$
    
         \item $W_{v,\pi}$ and $W_\Delta$ are equivalent; in particular, $\Pf_{v,\pi}=\Pf_\Delta$;
        \item  $\mathring{\Pf}_\Delta\cap \mathring{\Omega}_{r+1}\neq \emptyset$.
    \end{enumerate}
\end{lem}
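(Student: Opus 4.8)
The plan is to establish the three statements in sequence, since (2) and (3) both rely on understanding the structure of the splitting $W_{v,\pi}$ and its relationship to $W_\Delta$. For statement (1), I would compute $\mathrm{cd}_{\mu_\Delta}$ directly from the construction of $W_\Delta$. Recall that $\mathrm{cd}_\mu$ is the largest $s$ admitting an $s$-partition $\pi'$ with $\mu = \mu_{\pi'}$, equivalently $|H| - \dim \Pf_\mu$. The space $W_\Delta$ is built as a direct sum indexed by the $m$ vertices of $\Delta$, with each summand mapping into the shadow region $R_v^\Delta \cap H$ via the maps $\varphi^{w_u}_u$. The partition of $H$ into shadow regions $\{R_v^\Delta \cap H \mid v \in \Delta\}$ is precisely the $m$-partition $\pi$ induced by $\Delta$, and by construction $W_\Delta$ respects this partition in the sense that $\mu_\Delta = (\mu_\Delta)_\pi$. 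I would argue that $m$ is the maximal such number because each of the $m$ summands contributes an independent directional degeneracy; more carefully, $W_\Delta$ is already split along $\pi$ and cannot be split further without forcing one of the summand spaces $\bigcap_{w \in H - R_v^\Delta} \ker(\varphi^v_w)$ to vanish or decompose, which the exactness and $1$-generation of $\V$ prevent.

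The heart of the argument is statement (2), that $W_{v,\pi}$ and $W_\Delta$ are equivalent. Here I would use the description of the splitting $W_{v,\pi}$ from \Cref{subsection: 1.4} as the associated graded of the filtration $0 \subseteq W_v^{F_1} \subseteq \cdots \subseteq W_v^{F_{m-1}} \subseteq W_v$, where $F_\bullet$ is the filtration corresponding to $\pi$. The graded pieces $W_v^{F_j}/W_v^{F_{j-1}}$ are identified, via $\theta_{\pi_j}$, with subspaces of $U_{\pi_j}$. The key computation is to match each graded piece with the corresponding summand of $W_\Delta$. Writing $v = v_1, \dots, v_m$ for the oriented $m$-gon and unwinding the definition of $W_v^{F_j}$ as the image under $\Psi^v_H$ of $\bigcap_{u \notin F_j}\ker(\varphi^v_u)$, I would apply \Cref{lem:3.3} repeatedly to the oriented $m$-gon to show that the $j$-th graded piece is carried isomorphically, up to homotheties coming from the scalar ambiguity $\varphi^{u_1}_{u_2} = \k\varphi_\gamma$, onto the $j$-th summand $\bigcap_{w \in H - R_{v_j}^\Delta}\ker(\varphi^{v_j}_w)$ of $V_\Delta$ followed by $\Psi^\Delta_H$. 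Concretely, \Cref{lem:3.2} guarantees that the relevant admissible paths factor through the successive vertices of the oriented polygon, so that the projection $\theta_{\pi_j}$ applied to $W_v^{F_j}$ realizes exactly the image of $V_{v_j}$-part under the $\varphi^{v_j}_u$ for $u \in \pi_j$. The equivalence then produces an element $\varphi \in (\k^\times)^H$ with $\varphi W_{v,\pi} \subseteq W_\Delta$ (and conversely), yielding $\Pf_{v,\pi} = \Pf_\Delta$. Because $\Pf_{v,\pi}$ is by definition the face of $\Pf_v$ associated to the splitting along $\pi$, this identifies $\Pf_\Delta$ with a genuine face of $\Pf_v$.

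For statement (3), that $\mathring{\Pf}_\Delta \cap \mathring{\Omega}_{r+1} \neq \emptyset$, I would combine (1) and (2) with the dimension formula $\dim \Pf_{\mu} + \mathrm{cd}_\mu = |H|$ from \cite[Prop.\,2.7]{amini2024tropicalizationlinearseriestilings}. By (1), $\dim \Pf_\Delta = |H| - m$, so $\Pf_\Delta$ is a face of the expected codimension $m-1$ inside $\Pf_v$ (which has dimension $|H|-1$ by simplicity, \Cref{prop:3.1}). It then suffices to exhibit a single interior point of $\Pf_\Delta$ lying in $\mathring{\Omega}_{r+1}$, i.e., a point $q$ with $q(w) > 0$ for every $w \in H$. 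By \cite[Prop.\,2.5]{amini2024tropicalizationlinearseriestilings} applied to $\mu_\Delta$, there is a point in the relative interior of $\Pf_\Delta$; the content is to check its coordinates are strictly positive. Using that $H$ is the \emph{minimum} generating set, \Cref{lem:2.5} gives $\varphi^v_w \neq 0$ for all $v, w \in H$, which forces each summand of $W_\Delta$ to project nontrivially onto every coordinate of its shadow region, and hence $\mu_\Delta^*(w) \geq 1$ for all $w \in H$; combined with $\mu_\Delta(w) \geq 0$ (nonnegativity) this leaves room for a strictly positive interior point.

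The main obstacle I anticipate is statement (2), specifically tracking the scalar homotheties through the iterated application of \Cref{lem:3.3} and verifying that the associated graded of the kernel filtration matches the direct-sum construction of $V_\Delta$ coordinate by coordinate. The bookkeeping of which admissible path $\gamma_u$ was chosen for each $u \in H$, and ensuring the induced isomorphisms on graded pieces assemble into a single diagonal automorphism $\varphi \in (\k^\times)^H$ rather than merely a piecewise matching, is where the argument is most delicate; \Cref{lem:3.2} and the linked-net axiom (independence of $\varphi_\gamma$ from the admissible path up to scalar) are the tools that should make this go through.
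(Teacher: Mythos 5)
Your proposal follows essentially the same route as the paper: identify $V_\Delta=\bigoplus_i\ker(\varphi^{v_i}_{v_{i+1}})=\bigoplus_i\mathrm{Im}(\varphi^{v_{i+1}}_{v_i})$, match the graded pieces of the kernel filtration of $W_v$ with the summands of $W_\Delta$ via \Cref{lem:3.2} and \Cref{lem:3.3} while tracking the scalar ambiguities, and reduce (3) to showing $\mu_\Delta^*(u)\neq 0$ for every $u\in H$ via $\varphi^{v_{i+1}}_u\neq 0$ from \Cref{lem:2.5} --- which is exactly the paper's contradiction argument run forwards. The only point where the paper is more precise is in (1): simplicity of each summand $\Psi^{v_i}_{\pi_i}(\ker(\varphi^{v_i}_{v_{i+1}}))$ is obtained by applying \Cref{lem:1.1} with $v_0=v_i$ (the $v_i$-coordinate map being the identity on the summand) together with the nonvanishing of the $\varphi^{v_{i+1}}_u$, rather than the informal ``cannot be split further without forcing a summand to vanish or decompose'' argument you give.
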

\begin{proof}
Let $v_1,\dots,v_m$ be the vertices of $\Delta$ ordered such that $v_1=v$ and $v_1,\dots,v_m$ form an oriented $m$-gon. Put $v_{m+1}:=v_1$. Then, $$V_\Delta=\bigoplus_{i=1}^m\left(\bigcap_{w\in H-\pi_i}\ker(\varphi^{v_i}_w)\right),$$
where $\pi_i=R^\Delta_{v_i}\cap H$ for each $i=1,\dots,m$. Since $v_{i+1}\in H-\pi_i$, it follows from \Cref{lem:3.2} and \cite[Lem.\,8.1]{Esteves22072025} that $$\bigcap_{w\in H-\pi_i}\ker(\varphi^{v_i}_w)=\ker(\varphi^{v_i}_{v_{i+1}}).$$
By exactness, we have that
$$V_\Delta=\bigoplus_{i=1}^m \ker(\varphi^{v_i}_{v_{i+1}})=\bigoplus_{i=1}^m\mathrm{Im}(\varphi^{v_{i+1}}_{v_i}).$$
It follows that $$W_\Delta=\bigoplus_{i=1}^m\Psi^{v_i}_{\pi_i}( \mathrm{Im}(\varphi^{v_{i+1}}_{v_i}))$$
where $\Psi^{v_i}_{\pi_i}$ is the map defined in \eqref{eq:3.1}. We deduce from \Cref{lem:1.1} and \Cref{lem:2.5} that, for each $i=1,\dots, m$, the subspace $\Psi^{v_i}_{\pi_i}( \mathrm{Im}(\varphi^{v_{i+1}}_{v_i}))\subseteq U_{\pi_i}$ is simple, whence $\mathrm{cd}_{\mu_\Delta}=m$, finishing the proof of $(1)$. 

 As for $(2)$, let $F_\bullet$ be the filtration of $H$ corresponding to $\pi$, and let $$0\subseteq W^{F_1}_v\subseteq\cdots \subseteq W^{F_{m-1}}_v\subseteq W_v$$
be the corresponding filtration of $W_v$. It is sufficient to show that $\Psi^{v_i}_{\pi_i}(\ker(\varphi^{v_i}_{v_{i+1}}))$ and $\theta_{\pi_i}(W_v^{F_i})$ are equivalent in $U_{\pi_i}$. Recall that each $W^{F_i}_v$ is the image of $\bigcap_{z\in H-F_i}\ker(\varphi^v_z)$ in $U$ via $\Psi^v_H$. Since $v_{i+1}\in H-F_i$, it follows from \Cref{lem:3.2} and \cite[Lem.\,8.1]{Esteves22072025} that $$\bigcap_{z\in H-F_i}\ker(\varphi^v_z)=\ker(\varphi^v_{v_{i+1}}).$$
Thus,
$$\theta_{\pi_j}(W_v^{F_j})=\Psi^v_{\pi_i}(\ker(\varphi^v_{v_{i+1}})).$$
For each $w\in\pi_i$, the concatenation $\gamma^{v_i}_w\gamma^v_{v_i}$ is admissible. Since $\gamma^v_w$ is admissible, by \Cref{lem:2.5}, there is $c_w\in \k^*$ such that $$\varphi_{\gamma^{v}_w}=c_w\varphi_{\gamma^{v_i}_w}\varphi_{\gamma^v_{v_i}}.$$
Set $c:=(c_w)\in (\k^*)^{\pi_i}$. Then $c\Psi^{v_i}_{\pi_i}\circ\varphi^{v}_{v_i}=\Psi^{v}_{\pi_i}$, whence
$\Psi^{v}_{\pi_i}(\ker(\varphi^v_{v_{i+1}}))$ and $\Psi^{v_i}_{\pi_i}\circ\varphi^{v}_{v_i}(\ker(\varphi^v_{v_{i+1}}))$ are equivalent. It follows from \Cref{lem:3.3} that 
$$\Psi^{v_i}_{\pi_i}\circ\varphi^{v}_{v_i}(\ker(\varphi^v_{v_{i+1}}))=\Psi^{v_i}_{\pi_i}(\ker(\varphi^{v_i}_{v_{i+1}}))$$
as required.

  As for $(3)$, suppose by contradiction that $\mathring{\Pf}_\Delta\cap \mathring{\Omega}_{r+1}=\emptyset$. Since $\Pf_v\subseteq\Omega_{r+1}$, it follows from \cite[Prop.\,2.7]{amini2024tropicalizationlinearseriestilings} that $\mu_{v,\pi}^*(I)=0$ for some nonempty $I\subsetneq H$. Hence, for each $u\in I$ we have $\mu_{v,\pi}^*(u)=0$, because of \Cref{lem:1.1}. Fix $u\in I$ and let $v_i$ be its the shadow in $\Delta$. Then $u\in \pi_i$. Now, we have  $$\mu_{v,\pi}^*(u)=\dim\varphi^v_u(\ker(\varphi^v_{v_{i+1}}))=\dim\varphi^{v_i}_u\varphi^v_{v_i}(\ker(\varphi^v_{v_{i+1}})).$$
  By \Cref{lem:3.3} and exactness, 
   $$ \mu_{v,\pi}^*(u)=\dim(\varphi^{v_i}_u(\ker(\varphi^{v_i}_{v_{i+1}}))=\dim(\varphi^{v_i}_u(\mathrm{Im}(\varphi^{v_{i+1}}_{v_{i}}))=\dim(\mathrm{Im}(\varphi^{v_{i+1}}_{u})).$$
  Hence, $\varphi^{v_{i+1}}_{u}=0$, contradicting the minimality of $H$.
\end{proof}

    \begin{prop}\label{prop:3.6}
    Let $\V$ be a nontrivial exact finitely generated linked net of vector spaces over a $\Z^n$-quiver $Q$, and $H$ its minimum set of generators. Then, for each $v\in H$ and each $m$-partition $\pi=(\pi_1,\dots,\pi_m)$ of $H$ such that $\Pf_{v,\pi}\cap \mathring{\Omega}_{r+1}\neq\emptyset$, there is a $m-$gon $\Delta\subseteq H$ such that $v\in \Delta$ and the ordered partition of $H$ induced by $\Delta$ and $v$ coincides with $\pi$. In particular, $\Pf_{v,\pi}=\Pf_\Delta$. 
\end{prop}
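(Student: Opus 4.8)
The plan is to reverse the construction of \Cref{lem:3.5}: starting from the ordered partition $\pi$, I would reconstruct an oriented $m$-gon through $v$ whose induced partition is exactly $\pi$, and then invoke \Cref{lem:3.5}(2) to identify the two polytopes. Throughout, write $F_\bullet$ for the filtration attached to $\pi$, set $G_i := H - F_i = \pi_{i+1}\cup\cdots\cup\pi_m$, and consider the filtration of $V_v$ given by $K_i := \bigcap_{z\in G_i}\ker(\varphi^v_z)$, so that $W^{F_i}_v = \Psi^v_H(K_i)$.

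First I would record the consequences of the hypothesis $\Pf_{v,\pi}\cap\mathring{\Omega}_{r+1}\neq\emptyset$. Since $\mu_{v,\pi}=\mu_{W_{v,\pi}}$ is nonnegative, if $\mu^*_{v,\pi}(u)=0$ for some $u\in H$ then every $q\in\Pf_{v,\pi}$ satisfies $q(u)=0$, so $\Pf_{v,\pi}$ lies in the facet $\{q(u)=0\}$ of $\Omega_{r+1}$ and misses $\mathring{\Omega}_{r+1}$; hence $\mu^*_{v,\pi}(u)>0$ for every $u\in H$. Because $\Psi^v_H$ is injective and $\theta_{\pi_i}$ induces an isomorphism on the $i$-th graded piece, one has $\mu^*_{v,\pi}(\pi_i)=\dim K_i-\dim K_{i-1}$, which is positive by monotonicity of $\mu^*_{v,\pi}$ (\Cref{lem:1.1}); thus $0=K_0\subsetneq K_1\subsetneq\cdots\subsetneq K_m=V_v$ is a strict filtration. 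In particular $K_1\neq 0$, and this forces $v\in\pi_1$: were $v\in\pi_j$ with $j>1$, then $v\in G_1$ and $K_1\subseteq\ker(\varphi^v_v)=0$, a contradiction.

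Next I would build the polygon. For $i=1,\dots,m-1$ the sets $G_i$ are nested, so using the shadow theory for $\Z^n$-quivers (\cite[Prop.\,5.7]{Esteves22072025}) I would produce a vertex $v_{i+1}\in G_i$ through which every admissible path from $v$ into $G_i$ factors; combined with \Cref{lem:3.2} and \cite[Lem.\,8.1]{Esteves22072025} this gives $\ker(\varphi^v_{v_{i+1}})=K_i$. Setting $v_1:=v$ and $I_i:=\ess(\gamma^{v_i}_{v_{i+1}})$, the strictness of $K_\bullet$ shows the $v_i$ are pairwise distinct and each $I_i$ is nonempty, while the nesting $G_1\supseteq\cdots\supseteq G_{m-1}$ shows the $I_i$ are pairwise disjoint with $I_1\cup\cdots\cup I_{m-1}\neq T$; putting $I_m:=T-(I_1\cup\cdots\cup I_{m-1})$ gives an ordered partition $T=I_1\sqcup\cdots\sqcup I_m$ and an oriented $m$-gon $\Delta=\{v_1,\dots,v_m\}\subseteq H$ with $v_{i+1}=I_i\cdot v_i$. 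It then remains to check $v_i\in\pi_i$ and that the shadow regions match: since the $R^\Delta_{v_i}\cap H$ partition $H$, \Cref{lem:3.2} together with the gateway property of the $v_{i+1}$ would identify $R^\Delta_{v_i}\cap H$ with $\pi_i$, so the ordered partition induced by $\Delta$ and $v$ is $\pi$. The last assertion $\Pf_{v,\pi}=\Pf_\Delta$ is then immediate from \Cref{lem:3.5}(2).

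The main obstacle is the reconstruction in the third paragraph: the non-vanishing hypothesis only delivers the \emph{algebraic} fact that $K_\bullet$ is strict, and the real work is to convert this into an honest oriented polygon, i.e.\ to produce the gateway vertices $v_{i+1}$ in the correct regions $\pi_{i+1}$ and to verify that the essential types $I_i$ are nonempty and partition $T$. This is where the abstract-convexity structure of the hull ($P(H)=H$), the extreme-vertex analysis of \Cref{lem:2.1} (through the relation $\e_\Delta(\a)=v_k$ for $\a\in I_k$), and \cite[Lem.\,8.1]{Esteves22072025} must be combined. The matching of shadow regions with the parts of $\pi$ is the delicate point: a priori a gateway for $G_i$ could lie deeper than $\pi_{i+1}$, and ruling this out is exactly what the strictness of the filtration is designed to prevent.
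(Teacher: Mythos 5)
Your overall strategy is the paper's: derive from $\Pf_{v,\pi}\cap\mathring{\Omega}_{r+1}\neq\emptyset$ that the filtration $K_\bullet$ of $V_v$ is strict, build an oriented $m$-gon through $v$ from that data, and then match the induced partition with $\pi$; the preliminary reductions in your second paragraph are correct. But there is a genuine gap exactly where you flag it: the construction of the vertices $v_{i+1}$ is not carried out, and the route you propose does not work as stated. You ask shadow theory to produce a ``gateway'' $v_{i+1}\in G_i$ through which every admissible path from $v$ into $G_i$ factors; but \cite[Prop.\,5.7]{Esteves22072025} only provides shadows of single vertices relative to a hull-closed set, and the truncations $G_i=H-F_i$ are arbitrary subsets of $H$ with no reason to satisfy $P(G_i)=G_i$, so no such gateway vertex is available a priori.

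The idea you are missing is to pass through essential types rather than through subsets of $H$. The paper sets $I_j:=\bigcap_{u\in H-F_j}\ess(\gamma^v_u)$; the nonvanishing $\mu_v(F_j)\neq 0$ (i.e.\ $K_j\neq 0$) combined with \cite[Lem.\,8.1]{Esteves22072025} shows $I_j\neq\emptyset$, and one defines $w_{j+1}$ to be the shadow in $H$ of the concrete vertex $I_j\cdot v$, so that $\ker(\varphi^v_{w_{j+1}})=K_j$ by purity. The strict nesting $I_j\subsetneq I_{j+1}$ (hence the distinctness of the $w_j$ and the oriented $m$-gon structure) again uses the face-meets-interior hypothesis. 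Finally, the matching of the induced partition with $\pi$ is not obtained by controlling in advance where the $w_j$ sit, but by contradiction: if some $u\in\pi_i$ had shadow $w_j$ with $j\neq i$, then \Cref{lem:3.2} and \Cref{lem:3.3} force $\mu^*_{v,\pi}(u)=0$, contradicting $\Pf_{v,\pi}\cap\mathring{\Omega}_{r+1}\neq\emptyset$. These are precisely the steps your outline defers, so as written the argument is incomplete.
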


\begin{proof}
   
  Let $\F:=\Pf_{v,\pi}$ be the face of $\Pf_v$ corresponding to the $m$-partition $\pi$. We divide the proof in two steps. First, we will construct a certain oriented $m$-gon $\Delta$ containing $v$, and then we will show that the ordered partition induced by $\Delta$ and $v$ coincides with $\pi$. 
    
   Let $F_{\bullet}$ be the filtration corresponding to $\pi$.    
   By \cite[Prop.\,2.5]{amini2024tropicalizationlinearseriestilings} and $\mathring{\F}\cap \mathring{\Omega}_{r+1}\neq\emptyset$, we have $\mu_v(F_j)\neq 0$ for all $j=1,\dots,m-1$. Then,
    $$\bigcap_{u\in H-F_j}\ker(\varphi^v_u)\neq 0 \text{ for all }j=1,\dots,m-1.$$ 
    
    It follows from \cite[Lem.\,8.1]{Esteves22072025} that $$\bigcap_{u\in H-F_j}\mathrm{Ess}(\gamma^v_u)\neq \emptyset \text{ for all }j=1,\dots,m-1.$$
    
Now, for each $j=1,\dots, m-1$, define $$I_j:=\bigcap_{u\in H-F_j}\ess(\gamma^v_u).$$  Let $w_{j+1}$ denote the shadow of $I_j\cdot v$ in $H$. Since $\V$ is pure, $\varphi^{w_{j+1}}_{I_j\cdot v}$ is an isomorphism. Then, by \cite[Lem.\,8.1]{Esteves22072025}, we have \begin{equation}
    \bigcap_{u\in H-F_j}\ker(\varphi^v_u)=\ker(\varphi^v_{I_j\cdot v})=\ker(\varphi^v_{w_{j+1}}). \tag{3.2}\label{eq:3.5}
\end{equation} 
Thus, $$ W^{F_j}_v=\Psi^v_H\left(\ker(\varphi^v_{w_{j+1}})\right).$$
Since $F_{j}\subseteq F_{j+1}$ we have  $$I_{j}=\bigcap_{u\in H-F_j} \ess(\gamma^v_u)\subseteq \bigcap_{u\in H-F_{j+1}} \ess(\gamma^v_u)=I_{j+1}.$$
    Note that this inclusion is strict. Indeed, assume by contradiction that $I_j=I_{j+1}$. Then, $w_{j+1}=w_{j+2}$, 
    whence   $$\mu_v(F_j)=\dim\ker(\varphi^v_{w_{j+1}})=\dim\ker(\varphi^v_{w_{j+2}})=\mu_v(F_{j+1}).$$
    Now, for each $q\in \F$, we have $$\mu_v(F_j)=q(F_j)\leq q(F_{j+1})=\mu_v(F_{j+1}),$$ 
    which implies $q(F_{j+1}-F_j)=0$ for all $q\in \F$. This contradicts the assumption  $\mathring{\F}\cap \mathring{\Omega}_{r+1}\neq\emptyset$.

   Set $w_1:=v$. By construction, $w_1,w_2,\dots,w_{m}$ form an oriented $m$-gon, which we denote by $\Delta$. Let $\pi'$ be the partition of $H$ induced by $\Delta$ and $v$. We will show that $\pi'$ coincides with $\pi$. 


    We claim that for each $i=1,\dots, m$ and $u\in \pi_i$, the shadow of $u$ in $\Delta$ is $w_i$. If the claim holds, then by definition of $\pi'$, we have $\pi_i\subseteq \pi_i'$. It follows that $\pi=\pi'$, finishing the proof of the statement.

   
   Let $u\in \pi_i$ and $w_j$ its shadow in $\Delta$. Assume by contradiction that $j\neq i$. By \Cref{lem:3.2} there is an admissible path connecting $w_i$ to $u$ passing through $w_{i+1}$. Then $\ker(\varphi^{w_i}_{w_{i+1}})\subseteq \ker(\varphi^{w_i}_u)$. 
Since $w_1,\dots, w_m$ is an oriented $m$-gon, we have
$$\mu_{v,\pi}^*(u)=\dim \varphi^v_u(\ker(\varphi^{v}_{w_{i+1}}))=\dim \varphi^{w_i}_u\varphi^v_{w_i}(\ker(\varphi^{v}_{w_{i+1}}))=\dim \varphi^{w_i}_u(\ker(\varphi^{w_i}_{w_{i+1}}))=0,
$$
where the first equality follows from \eqref{eq:3.5}, the second from $I_{i-1}\subseteq\ess(\gamma^v_u)$, and the third from \Cref{lem:3.3}. 
This contradicts the assumption that $\Pf_{v,\pi}\cap\mathring{\Omega}_{r+1}\neq \emptyset$. 
\end{proof}

\section{Tilings induced by linked nets.}\label{section:4}

Let $\V$ be a nontrivial finitely generated exact linked net of vector spaces over a $\Z^n$-quiver $Q$, and let $H$ be its minimum set of generators. As in \Cref{section:3}, for each $v,u\in H$, we fix a choice of an admissible path $\gamma^v_u$ connecting $v$ to $u$. For each $v\in H$, we put $(\mu_v,\mu_v^*)$ the corresponding modular pair, as defined in \Cref{subsection:3.1}.
Let $\mathcal{C}_H(\V)$ be the collection of all supermodular functions $\mu_v$ for $v\in H$. Define $$\Pf_\V:=\Pf_{\mathcal{C}_H(\V)}=\bigcup_{v\in H} \Pf_{v}.$$

\begin{lem} \label{lem:4.1}
    Notation as above. For each $v,u\in H, v\neq u$, there exists a nonzero $c\in \k^H$ such that $cW_v\subseteq W_u$. Also, $\mu_v\neq \mu_u$. Finally, the collection $\mathcal{C}_H(\V)$ is separated.
\end{lem}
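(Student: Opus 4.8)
The plan is to establish the three assertions in the order stated, since the separation claim depends on both the existence of $c$ and the inequality $\mu_v\neq\mu_u$.

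For the existence of $c$, I would fix the chosen admissible path $\gamma:=\gamma^v_u$ and, for each $w\in H$, compare the composite $\varphi_{\gamma^u_w}\varphi_\gamma=\varphi_{\gamma^u_w\gamma}$ with $\varphi_{\gamma^v_w}$. Both $\gamma^u_w\gamma$ and $\gamma^v_w$ join $v$ to $w$ and the latter is admissible, so whenever $\gamma^u_w\gamma$ is admissible the weakly linked net axiom forces $\varphi_{\gamma^u_w\gamma}=c_w\varphi_{\gamma^v_w}$ for a scalar $c_w$, nonzero because $\varphi^v_w\neq 0$ by \Cref{lem:2.5}; when $\gamma^u_w\gamma$ is non-admissible the composite vanishes and I set $c_w:=0$. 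Putting $c:=(c_w)_{w\in H}\in\k^H$, one checks componentwise the identity $c\cdot\Psi^v_H=\Psi^u_H\circ\varphi_\gamma$, whence $cW_v=\Psi^u_H(\varphi_\gamma(V_v))\subseteq W_u$. Taking $w=u$ makes $\gamma^u_u$ trivial and $c_u=1$, so $c\neq 0$.

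For $\mu_v\neq\mu_u$, I would evaluate both functions on the subset $H\setminus\{v\}$. Since $\varphi^v_v=\mathrm{id}$, one has $\mu_v(H\setminus\{v\})=\dim\ker(\varphi^v_v)=0$. On the other hand, the concatenation $\gamma^u_v\gamma^v_u$ is a nontrivial closed path at $v$; by the third $\Z^n$-quiver axiom its type is constant across all arrow types, hence positive, so $\gamma^u_v\gamma^v_u$ is a non-admissible circuit and $\varphi^u_v\varphi^v_u=0$. Thus $\mathrm{Im}(\varphi^v_u)\subseteq\ker(\varphi^u_v)$, and since $\varphi^v_u\neq 0$ by \Cref{lem:2.5} we obtain $\mu_u(H\setminus\{v\})=\dim\ker(\varphi^u_v)>0$. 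Hence $\mu_v$ and $\mu_u$ disagree on $H\setminus\{v\}$, so $\mu_v\neq\mu_u$.

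Finally, to show that $\mathcal{C}_H(\V)$ is separated, I would feed the $c$ of the first step into the mechanism used in the proof of \Cref{prop:1.2}: writing $I:=\{w\in H\mid c_w\neq 0\}$, the inclusion $cW_v\subseteq W_u$ induces an injection $W_{v,I}\hookrightarrow W_u^I$, giving $\mu_v^*(I)\leq\mu_u(I)$, i.e. the bipartition $\pi:=(I,I^c)$ is a separation for $(\mu_u,\mu_v)$. Here $I\neq\emptyset$ because $c\neq 0$, and $I\neq H$ because otherwise $c$ would be an isomorphism, forcing $W_v$ and $W_u$ to be equivalent and hence $\mu_v=\mu_u$, contradicting the previous paragraph. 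Thus $\pi$ is a nontrivial bipartition, and since $\mu_u$ is simple by \Cref{prop:3.1} we have $(\mu_u)_\pi\neq\mu_u$, so the separation is nontrivial. I expect the main obstacle to lie in the first step: carefully tracking the admissibility of the concatenated paths $\gamma^u_w\gamma$ and the associated scalars, together with the verification in the last step that $c$ is genuinely non-invertible. The latter is precisely where $\mu_v\neq\mu_u$ is invoked, which is why the three assertions must be proved in this order.
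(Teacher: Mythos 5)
Your proposal is correct and, for the construction of $c$ and for the separation argument, it is essentially identical to the paper's proof: the paper also sets $I$ to be the set of $w$ for which $\gamma^u_w\gamma$ is admissible, obtains the scalars $c_w$ from the weakly-linked-net axiom together with $\varphi^v_w\neq 0$ (\Cref{lem:2.5}), and then uses the induced injection $W_{v,I}\hookrightarrow W_u^I$ to get $\mu_v^*(I)\leq\mu_u(I)$. The one place you diverge is the claim $\mu_v\neq\mu_u$: the paper observes that $c_v=0$ while $c_u\neq 0$, so $c$ is nonzero but not invertible, and invokes \Cref{prop:1.2} (via simplicity of $W_v$ and $W_u$ from \Cref{prop:3.1}) to conclude that the relative interiors of the two full-dimensional polytopes are disjoint, hence $\mu_v\neq\mu_u$; you instead evaluate both functions at $H\setminus\{v\}$, getting $\mu_v(H\setminus\{v\})=0$ from $\varphi^v_v=\mathrm{id}$ and $\mu_u(H\setminus\{v\})=\dim\ker(\varphi^u_v)>0$ from $0\neq\im(\varphi^v_u)\subseteq\ker(\varphi^u_v)$. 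Your version is more elementary and self-contained (it does not need the polytope geometry of \Cref{prop:1.2}), at the cost of using that compositions along non-admissible paths vanish — a fact slightly stronger than axiom (2) as literally stated, but it is exactly \cite[Lem.\,6.1]{Esteves22072025}, which the paper itself uses in the proof of \Cref{lem:2.5}, so you should cite it there. Your explicit verification that the separation is nontrivial (via $I\neq\emptyset,H$ and simplicity of $\mu_u$) is also slightly more careful than the paper's, which leaves this implicit.
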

\begin{proof}
     Let $\gamma$ be an admissible path connecting $v$ to $u$. Let $I$ be the set of $w\in H$ such that the concatenation $\gamma^u_w\gamma$ is admissible. Since $H$ is minimum, for each $w\in I$, we have $\varphi_{\gamma^u_w}\varphi_\gamma\neq 0$ and $\varphi_{\gamma^v_w}\neq 0$. Then, there exists $c_w\in \k^*$ such that $$c_w\varphi_{\gamma^v_w}=\varphi_{\gamma^u_w}\varphi_\gamma.$$ 
Put $c_w:=0$ for $w\in H-I$, and let $c:=(w_w)_{w\in H}\in \k^H$. We claim that $cW_v\subseteq W_u$. Indeed, an element of $W_v$ is of the form $(\varphi_{\gamma^v_w}(s))_{w\in H}$ for $s\in V_v$. Note that $(\varphi_{\gamma^u_w}\varphi_{\gamma}(s))_{w\in H}\in W_u$. If $w\not\in I$, then $\varphi_{\gamma^u_w}\varphi_{\gamma}(s)=0$. If $w\in I$, then $$\varphi_{\gamma^u_w}\varphi_{\gamma}(s)=c_w\varphi_{\gamma^v_w}(s).$$
Hence,  $c(\varphi_{\gamma^v_w}(s))_{w\in H}=(\varphi_{\gamma^u_w}\varphi_{\gamma}(s))_{w\in H}\in W_u$, as required. Note that $c_v=0$ and $c_u\neq 0$. Hence, it follows from \Cref{prop:1.2} that $\mu_v\neq \mu_u$.

Note that $c$ induces an injective map $W_{v,I}\rightarrow W_{u}^I$, whence $\mu_v^*(I)\leq \mu_u(I)$. Then $(I,I^c)$ is a nontrivial separation for $(\mu_u,\mu_v)$.
\end{proof}

\begin{thm}\label{thm:4.2}
The collection of polytopes $\Pf_{\mu_v}$ and their faces, associated to an exact finitely generated nontrivial linked net of vector spaces $\V$ of dimension $r+1$ gives a polyhedral tiling of the standard simplex $\Omega_{r+1}$. In particular, $\Pf_\V=\Omega_{r+1}$. 
\end{thm}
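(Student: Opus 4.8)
The plan is to apply \Cref{prop:1.3} to the finite collection $\mathcal{W}:=\{W_v\mid v\in H\}$ of subspaces of $U$ attached to the vertices of the minimum generating set, and then read off the equality $\Pf_\V=\Omega_{r+1}$ from the resulting tiling. Thus the whole argument reduces to verifying that $\mathcal{W}$ satisfies the hypotheses of \Cref{prop:1.3}: its members are simple and pairwise nonequivalent, they satisfy the mutual containment condition (condition (2) of \Cref{prop:1.3}), and $\mathcal{W}$ is complete for $\mathring{\Omega}_{r+1}$ while $\Pf_{\mathcal{W}}$ meets $\mathring{\Omega}_{r+1}$.

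Three of these inputs are immediate from results already in hand. Simplicity of each $W_v$ is exactly \Cref{prop:3.1}. For the containment condition, the first assertion of \Cref{lem:4.1} produces, for every ordered pair $v,u\in H$, a nonzero $c\in\k^H$ with $cW_v\subseteq W_u$. Nonequivalence then follows formally: were $W_v$ and $W_u$ equivalent we would have $\Pf_{W_v}=\Pf_{W_u}$ and hence $\mu_v=\mu_u$, contradicting the second assertion of \Cref{lem:4.1}. To see that $\Pf_{\mathcal{W}}$ meets $\mathring{\Omega}_{r+1}$, note that by \Cref{prop:3.1} each $\mu_v$ is simple, so $\Pf_v$ has dimension $|H|-1$ and is therefore full-dimensional in $\R^H_{r+1}$; its relative interior $\mathring{\Pf}_v$ is then a nonempty open subset of $\R^H_{r+1}$ contained in $\Omega_{r+1}$, and since the relative boundary of $\Omega_{r+1}$ lies in the finite union of hyperplanes $\{q(v)=0\}$, this open set cannot avoid $\mathring{\Omega}_{r+1}$.

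The heart of the matter is completeness of $\mathcal{W}$ for $\mathring{\Omega}_{r+1}$, and this is where \Cref{prop:3.6} carries the load. Fix $v\in H$ and a bipartition $\pi=(\pi_1,\pi_2)$ of $H$ with $\Pf_{v,\pi}\cap\mathring{\Omega}_{r+1}\neq\emptyset$. By \Cref{prop:3.6} there is a $2$-gon $\Delta=\{v,u\}\subseteq H$ containing $v$ whose induced ordered partition equals $\pi$, with $\Pf_{v,\pi}=\Pf_\Delta$. The key observation is that the ordered partition induced by the same $\Delta$ based at the other vertex $u$ is exactly the reversed bipartition $\pi^c=(\pi_2,\pi_1)$, the shadow regions $R^\Delta_{v}$ and $R^\Delta_{u}$ being intrinsic to $\Delta$ and only their ordering changing with the base vertex. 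Since the modular pair $(\mu_\Delta,\mu_\Delta^*)$ attached to $\Delta$ does not reference a base vertex, \Cref{lem:3.5}(2) applied at $u$ gives $\mu_{u,\pi^c}=\mu_\Delta=\mu_{v,\pi}$. As $u\neq v$, the subspace $W_u\in\mathcal{W}$ is the required partner, so $\mathcal{W}$ is complete.

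With every hypothesis in place, \Cref{prop:1.3} yields that the polytopes $\Pf_{W_\pi}$, for $W\in\mathcal{W}$ and $\pi$ an ordered partition of $H$ --- that is, the faces of the $\Pf_{\mu_v}$ --- tile $\Omega_{r+1}$, which is the first assertion. The equality $\Pf_\V=\Omega_{r+1}$ is then immediate, since $\Pf_\V=\bigcup_{v\in H}\Pf_v=\Pf_{\mathcal{W}}$ and the proof of \Cref{prop:1.3} identifies $\Pf_{\mathcal{W}}$ with $\Omega_{r+1}$. I expect the only real obstacle to lie in the completeness step: one must confirm that the $2$-gon furnished by \Cref{prop:3.6} reverses correctly under change of base vertex and that $\mu_\Delta$ is orientation-independent, so that $W_u$ realizes $\pi^c$ precisely; the remaining verifications are routine applications of the cited results.
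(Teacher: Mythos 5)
Your proposal is correct and follows essentially the same route as the paper: both verify the hypotheses of \Cref{prop:1.3} for the collection $\{W_v\mid v\in H\}$, using \Cref{prop:3.1} for simplicity, \Cref{lem:4.1} for the containment/nonequivalence condition, and \Cref{prop:3.6} together with \Cref{lem:3.5} to establish completeness via the $2$-gon whose induced partition at the opposite vertex realizes $\pi^c$. Your additional explicit checks (nonequivalence of the $W_v$ and that $\Pf_{\mathcal{W}}$ meets $\mathring{\Omega}_{r+1}$) are details the paper leaves implicit, and they are argued correctly.
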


\begin{proof} 

We will check the conditions in \Cref{prop:1.3}. Condition $(2)$ follows from \Cref{lem:4.1}.

As for condition $(1)$, let $v\in H$ and let $\pi=(\pi_1,\pi_2)$ be a bipartition of $H$ such that $\Pf_{v,\pi}$ intersects $\mathring{\Omega}_{r+1}
$. By \Cref{prop:3.6} there is a $2$-gon $\Delta=\lbrace v,u\rbrace\subseteq H$ such that the ordered partition induced by $\Delta$ and $v$ coincides with $\pi$. Now, the bipartition $\pi^c$ is equal to the bipartition induced by $\Delta$ and $u$. It follows from \Cref{lem:3.5} that 
$$\Pf_{v,\pi}=\Pf_\Delta=\Pf_{u,\pi^c}.$$
Hence, by \cite[Prop.\,2.7]{amini2024tropicalizationlinearseriestilings}, $\mu_{v,\pi}=\mu_\Delta=\mu_{u,\pi^c}$. Thus, $\mathcal{W}$ is complete for $\mathring{\Omega}_{r+1}$.

\end{proof}

\subsection{The linked projective space.} 
Let $\V$ be a representation of a $\Z^n$-quiver $Q$ in the category of nontrivial finite-dimensional vector spaces. We denote by $\LP(\V)$ the quiver Grassmanian of subrepresentations of pure dimension $1$ of $\V$. Assume that $\V$ is a finitely generated exact linked net. Let $H$ be the minimum set of generators of $\V$, and let $\LP_H(\V)$ be the subscheme of $\prod_{v\in H}\P(V_v)$ defined by 

$$\LP_H(\V):=\left\lbrace (s_v \vert v\in H)\in \prod_{v\in H} \P(V_v) \,\,\Bigg\vert\,\, \varphi^v_w(s_v)\wedge s_w=0 \text{ for all } v,w\in H\right\rbrace.$$

The natural map $\phi^\V_H\colon\LP(\V)\rightarrow \LP_H(\V)$ induced by restriction is a bijection. See for further details \cite[Prop.\,5.5 and Def.\,5.6]{esteves2024quiverrepresentationsarisingdegenerations}. 


\begin{defn}
Let $\V$ be a nontrivial finitely generated exact linked net of vector spaces over a $\Z^n$-quiver $Q$. Give $\LP(\V)$ the scheme structure induced from the bijection $\Psi^\V_H$ where $H$ is the minimum set of generators of $\V$. We call $\LP(\V)$ the \textit{linked projective space} associated to $\V$. 
\end{defn}
It was shown in \cite{esteves2024quiverrepresentationsarisingdegenerations} that $\LP(\V)$ is generically smooth and a local complete intersection. Hence, it is reduced. The irreducible components of $\LP(\V)$, endowed  with the reduced induced scheme structure, can be described as the scheme-theoretic images of the rational maps 
$$\P(V_v)\dashrightarrow \prod_{u\in H} \P(V_u)$$
for $v$ ranging over $H$. It follows from \Cref{lem:2.5} that each map is indeed defined on a dense open subset. Its image is the closure of the open subset $$\LP(\V)_v^*:=\lbrace \mathfrak{M}\subseteq \V 
\,\big\vert\, \mathfrak{M} \text{ is generated by } v\rbrace,$$ which we denote by $\LP(\V)_v$.

\begin{rem}
    In analogy with \cite{Cartwright2011}, we define the \textit{reduction complex} of $\LP(\V)$ as the simplicial complex with one vertex for each component of $\LP(\V)$, where a set of vertices forms a simplex if and only if the intersection of the corresponding components is nonempty. 
    
    By \cite[Prop.\,6.3]{esteves2024quiverrepresentationsarisingdegenerations} the reduction complex of $\LP(\V)$ isomorphic to the simplicial complex formed by the polygons in $H$. In addition, by \Cref{prop:3.6} the latter is isomorphic to the simplicial complex whose set of vertices is $H$, where a set of vertices form a simplex if and only if the intersection of the corresponding polytopes is contained in $\mathring{\Omega}_{r+1}$. 
 \end{rem}
 
\subsubsection{Chow ring.}\label{subsubsection:4.1.1 } Put $\P:=\prod_{v\in H}\P(V_v)$. Let $A^*(\P)$ be the Chow ring of $\P$. Recall that
$$A^*\left(\P\right)=\frac{\Z[h_v\, \vert,\,v\in H]}{(h_v^{r+1} \vert v\in H)}$$
where the $h_v$ denote the pullbacks, via the projection maps, of the hyperplane classes on the $\P(V_v)$. 

Given a closed subscheme $X\subseteq \P$ with pure dimension $r$, its Chow class in $A^*(\P)$ is given by 
$$[X]=\sum_{q\in \Omega_{r}(\Z)} \deg_\P^q(X) \prod_{v\in H} h_v^{r-q(v)}\in A^*\left(\P\right)$$
where $\Omega_{r}(\Z):=\Omega_r\cap \Z^H$, and $\deg^q_\P(X)$ is equal to the number of points in the intersection of $X$ with a product $\prod L_v$  of general linear subspaces $L_v\subset \P(V_v)$ of dimension $r-q(v)$ for $v\in H$. In particular, the Chow class of the small diagonal in $\prod_{v\in H}\P(V_v)$ is  $$\sum_{q\in \Omega_{r}(\Z)} \prod_{v\in H} h_v^{r-q(v)}\in A^*\left(\P\right).$$

\subsubsection{Chow class of the linked projective space.}\label{subsubsection:4.1}
Here we show \Cref{thm:4.4}, which gives a formula for the Chow class of the linked projective space. 
\begin{thm}\label{thm:4.4}
    Let $\V$ be a nontrivial exact finitely generated linked net of vector spaces of dimension $r+1$ over a $\Z^n$-quiver $Q$ and let $H$ be its minimum set of generators. Then $H$ is convex and $$[\LP(\V)]=\sum_{q\in \Omega_{r}(\Z)} \prod_{v\in H} h_v^{r-q(v)}\in A^*\left(\prod_{v\in H}\P(V_v)\right).$$
     
\end{thm}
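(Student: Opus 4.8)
The plan is to reduce the Chow class computation to a count of components via Li's theorem, and then to translate the tiling of $\Omega_{r+1}$ from \Cref{thm:4.2} into a statement about the level-$r$ lattice $\Omega_r(\Z)$ by a one-unit perturbation. Convexity of $H$ requires no new work: the equality $P(H)=H$ is the final assertion of \Cref{lem:2.5}. For the class, recall from \cite{esteves2024quiverrepresentationsarisingdegenerations} that $\LP(\V)$ is reduced of pure dimension $r$, with irreducible components $\LP(\V)_v$ for $v\in H$; these are pairwise distinct, since by \Cref{lem:4.1} the functions $\mu_v$ are pairwise distinct, hence so are the base polytopes $\Pf_v$ (a base polytope recovers its supermodular function via $\mu_v(I)=\min_{q\in\Pf_v}q(I)$). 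Therefore $[\LP(\V)]=\sum_{v\in H}[\LP(\V)_v]$, and reading off the coefficient of $\prod_{u}h_u^{r-q(u)}$ it suffices to prove that $\sum_{v\in H}\deg^q_\P(\LP(\V)_v)=1$ for every $q\in\Omega_r(\Z)$; that is, that exactly one component contributes to each monomial.

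The input from Li \cite{10.1093/imrn/rnp201} is the per-component multidegree, which I would record in the polytope form $\deg^q_\P(\LP(\V)_v)=1$ if $q(I)\ge\mu_v(I)$ for every proper $I\subsetneq H$, and $\deg^q_\P(\LP(\V)_v)=0$ otherwise. The point is that this records the multidegree of the \emph{closure} of the image of the rational map $\P(V_v)\dashrightarrow\prod_u\P(V_u)$ given by the maps $\varphi^v_u$, which is controlled by $\mu_v$ rather than by the naive base-point-free intersection count (the two already differ once some $\varphi^v_u$ fails to be injective). I would then reformulate this in terms of $\Pf_v$: fix any $\delta\in\R^H$ with $\delta(w)>0$ for all $w$ and $\delta(H)=1$, and set $p:=q+\delta$. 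Since $q$ and $\mu_v,\mu_v^*$ are integral while $0<\delta(I)<1$ for every nonempty proper $I$ and $\delta(H)=1$, $\delta(\emptyset)=0$, the defining inequalities $\mu_v(I)\le p(I)\le\mu_v^*(I)$ of $\Pf_v$ hold at $p$ exactly when $q(I)\ge\mu_v(I)$ for all proper $I$ (the upper and lower families being exchanged under $I\mapsto I^c$). Hence $\deg^q_\P(\LP(\V)_v)=1$ if and only if $q+\delta\in\Pf_v$.

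It remains to count the $v$ with $q+\delta\in\Pf_v$. Here the perturbation does the work automatically: for $p=q+\delta$ one has $p(w)=q(w)+\delta(w)>0$ for all $w$, so $p\in\mathring\Omega_{r+1}$, and $p(I)=q(I)+\delta(I)\notin\Z$ for every nonempty proper $I$, so $p$ lies on no coordinate wall and on no facet hyperplane of any tile (these are cut out by equations $p(I)=\mu_w(I)$ or $p(I)=\mu_w^*(I)$ with integral right-hand side). By \Cref{thm:4.2} the polytopes $\Pf_v$ and their faces tile $\Omega_{r+1}$, and each $\Pf_v$ is full-dimensional by \Cref{prop:3.1}; since $p$ is an interior point avoiding every lower-dimensional cell, it lies in the relative interior of exactly one tile $\Pf_v$. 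Thus exactly one $v$ satisfies $q+\delta\in\Pf_v$, whence $\sum_{v\in H}\deg^q_\P(\LP(\V)_v)=1$ for every $q\in\Omega_r(\Z)$, which is precisely the asserted formula.

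The step I expect to be the main obstacle is the first: correctly importing Li's computation of $[\LP(\V)_v]$ and matching it to the base polytope $\Pf_v$ in the displayed form. This is where the geometry enters, since the multidegree is that of the image closure of a genuinely non-morphic rational map, and one must check that its indeterminacy contributions are exactly accounted for by $\mu_v$ (equivalently, by the lattice points of the half-open version of $\Pf_v$ selected by the perturbation). Once this per-component formula is secured, the remaining argument — turning the level-$(r+1)$ tiling into a partition of $\Omega_r(\Z)$ — is elementary, depending only on integrality and on the strict inequalities $0<\delta(I)<1$.
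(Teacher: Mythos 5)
Your proposal is correct and follows the same overall strategy as the paper's proof: decompose $[\LP(\V)]=\sum_{v\in H}[\LP(\V)_v]$ using reducedness, import Li's per-component multidegree formula, and use the tiling of \Cref{thm:4.2} to show that each $q\in\Omega_r(\Z)$ contributes to exactly one component. Your criterion ``$q(I)\ge\mu_v(I)$ for all proper $I$'' is equivalent, via $I\mapsto I^c$ and integrality, to the paper's ``$q(I)\le\mu_v^*(I)-1$ for all nonempty proper $I$'' (the set the paper calls $M_v$), so the input from Li is the same in both arguments, and both treat it as a black box. Where you genuinely differ is the counting device. The paper proves existence by rescaling $q\mapsto (r+1)q/r$ into $\Omega_{r+1}$ and locating it in some tile, and then proves disjointness of the $M_v$ separately by invoking the separation from \Cref{lem:4.1} (a bipartition $(I,J)$ with $\mu_v^*(I)+\mu_u^*(J)\le r+1$). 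Your single additive perturbation $q\mapsto q+\delta$ produces a point of $\mathring{\Omega}_{r+1}$ with no integral proper subsum, hence lying on no face hyperplane of any $\Pf_w$; since the tiles are full-dimensional by \Cref{prop:3.1} and distinct tiles meet only along proper common faces, this point lies in the relative interior of exactly one tile, giving existence and uniqueness in one stroke. This is arguably cleaner --- it does not need the separation inequality directly and avoids the division by $r$ in the paper's rescaling --- at the price of leaning more heavily on the ``intersect in a common face'' part of the tiling statement rather than only on its covering part.
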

\begin{proof} After extending the field, we may assume $\k$ is infinite. We will show that, for each $q\in \Omega_r(\Z)$, the monomial $$ \prod_{v\in H}h_v^{r-q(v)}$$
 appears in the class $[\LP(\V)]$ with multiplicity $1$. Since $\LP(\V)$ is reduced, its Chow class can be written as
$$[\LP(\V)]=\sum_{v\in H} [\LP(\V)_v].$$  
 Set $$M_v:=\left\lbrace q\in \Omega_r(\Z) \hspace{0.2cm}\vert\hspace{0.2cm} q(I)\leq\mu_v^*(I)-1 \text{ for each } I\subsetneq H, I\neq\emptyset \right\rbrace.$$
 It follows from \cite{10.1093/imrn/rnx003} that
 $$[\LP(\V)_v]=\sum_{q\in M_v} \prod h_v^{r-q(v)}.$$
 It is thus enough to show that $\bigcup_{v\in H} M_v=\Omega_r(\Z)$ and $M_v\cap M_u=\emptyset$ for each $u,v\in H$ with $u\neq v$.  
 
 Let $q\in \Omega_r(\Z)$. Then $q':=(r+1)q/r\in \Omega_{r+1}$. It follows from \Cref{thm:4.2} that there exists $v\in H$ such that $q'\in \Pf_v$. We claim that $q\in M_v$. We have to show that $q(I)\leq \mu_v^*(I)-1$ for each $I\subsetneq H, I\neq \emptyset$.
Since $\mu_v$ is simple, we have $\mu_v^*(I)\neq 0$. As $q'\in \Pf_v$, we have $$q(I)\leq r\mu_v^*(I)/(r+1)<\mu^*_v(I).$$ Since $q(I)\in\Z$, we have $q(I)\leq \mu_v^*(I)-1$, as required. 

Assume now by contradiction that $q\in M_u\cap M_v$. By \Cref{lem:4.1}, there exists a bipartition $\pi=(I,J)$ such that $\mu_v^*(I)+\mu_u^*(J)\leq r+1$. Hence, we have
  $$r=q(I)+q(J)\leq (\mu_v^*(I)-1)+(\mu_u^*(J)-1)\leq r-1,$$
 a contradiction. Thus $M_u\cap M_v=\emptyset$.

\end{proof}



\bibliographystyle{plain}
\bibliography{ref.bib}

\end{document}